\renewcommand{\div}{\operatorname{div}}
\newcommand{\Rr}{{\mathbb{R}}}
\newcommand{\Nn}{{\mathbb{N}}}
\newcommand{\Aa}{{\mathcal{A}}}
\newcommand{\epsi}{\epsilon}
\def\dx{{\rm d}x}
\def\leq{\leqslant}
\def\geq{\geqslant}
\numberwithin{equation}{section}
\newtheoremstyle{thmlemcorr}{10pt}{10pt}{\itshape}{}{\bfseries}{.}{10pt}{{\thmname{#1}\thmnumber{
#2}\thmnote{ (#3)}}}
\newtheoremstyle{thmlemcorr*}{10pt}{10pt}{\itshape}{}{\bfseries}{.}\newline{{\thmname{#1}\thmnumber{
\newtheoremstyle{defi}{10pt}{10pt}{\itshape}{}{\bfseries}{.}{10pt}{{\thmname{#1}\thmnumber{
#2}\thmnote{ (#3)}}}
\newtheoremstyle{remexample}{10pt}{10pt}{}{}{\bfseries}{.}{10pt}{{\thmname{#1}\thmnumber{
#2}\thmnote{ (#3)}}}
\newtheoremstyle{ass}{10pt}{10pt}{}{}{\bfseries}{.}{10pt}{{\thmname{#1}\thmnumber{
A#2}\thmnote{ (#3)}}}
\theoremstyle{thmlemcorr}
\newtheorem{theorem}{Theorem}
\numberwithin{theorem}{section}
\theoremstyle{thmlemcorr*}
\newtheorem{theorem*}{Theorem}
\newtheorem{lemma*}[theorem]{Lemma}
\newtheorem{corollary*}[theorem]{Corollary}
\newtheorem{proposition*}[theorem]{Proposition}
\newtheorem{problem*}[theorem]{Problem}
\newtheorem{conjecture*}[theorem]{Conjecture}
\theoremstyle{defi}
\newtheorem{definition}[theorem]{Definition}
\newtheorem{hyp}{Assumption}
\newtheorem{problem}{Problem}
\theoremstyle{remexample}
\newtheorem{remark}[theorem]{Remark}
\newtheorem{teo}[theorem]{Theorem}
\newtheorem{lem}[theorem]{Lemma}
\newtheorem{pro}[theorem]{Proposition}
\newtheorem{cor}[theorem]{Corollary}
\theoremstyle{ass}
\begin{document}

\title[Mean-Field Games with Dirichlet conditions]{Existence of weak solutions to first-order stationary Mean-Field Games with Dirichlet conditions}

\author{Rita Ferreira}
\address[R. Ferreira]{
        King Abdullah University of Science and Technology (KAUST), CEMSE Division, Thuwal 23955-6900, Saudi Arabia.}
\email{rita.ferreira@kaust.edu.sa}
\author{Diogo Gomes}
\address[D. Gomes]{
        King Abdullah University of Science and Technology (KAUST), CEMSE Division, Thuwal 23955-6900, Saudi Arabia.}
\email{diogo.gomes@kaust.edu.sa}
\author{Teruo Tada}
\address[T. Tada]{
       King Abdullah University of Science and Technology (KAUST), CEMSE Division, Thuwal 23955-6900, Saudi Arabia.}
\email{teruo.tada@kaust.edu.sa}

\keywords{Mean-Field Games; Stationary Problems; Weak solutions; Dirichlet boundary conditions. }
\thanks{R. Ferreira, D. Gomes, and T. Tada were partially supported baseline and start-up funds from King Abdullah University of Science and Technology (KAUST). 
}
\date{\today}

\begin{abstract}
In this paper, we study first-order stationary monotone mean-field games (MFGs) with Dirichlet boundary conditions. While for Hamilton--Jacobi equations Dirichlet conditions may not be satisfied, here, we establish the existence of solutions of MFGs that satisfy those conditions. To construct these solutions, we introduce a monotone regularized problem. Applying Schaefer's fixed-point theorem and using the monotonicity of the MFG, we verify that there exists a unique weak solution to the regularized problem. Finally, we take the limit of the solutions of the regularized problem and using Minty's method, we show the existence of weak solutions to the original MFG.
\end{abstract}

\maketitle

\section{Introduction}

Mean-field games (MFGs) were introduced in the mathematical community in \cite{ll1}, \cite{ll2}, and \cite{ll3} and, independently, around the same time in
the engineering community in \cite{Caines2} and \cite{Caines1}. These games model the behavior of large populations of rational agents who seek to optimize an individual utility.
Here, we consider the following first-order stationary MFG with a Dirichlet boundary condition.
\begin{problem}\label{OP}
Suppose that $\Omega \subset \Rr^d$ is an open and bounded set with smooth boundary, $\partial \Omega$, and $\gamma>1$.  Let $V, \phi, h \in C^{\infty}(\overline{\Omega})$, $g\in C^\infty(\Rr_0^+)$, and $H\in C^\infty(\overline\Omega\times\Rr^d)$ be such that $g$ is increasing, $\phi\geq 0$ in $\Omega$, and $\int_{\Omega}\phi\, \dx=1$.
Find $(m,u)\in L^1(\Omega)\times W^{1,\gamma}(\Omega)$ satisfying $m\geq0$ and
\begin{equation*}
\begin{cases}
&-u  - H(x,Du) +g(m) - V(x)=0 \quad \rm{in}\  \Omega, \\
& m - \div\big(mD_pH(x,Du)\big) - \phi = 0 \quad \rm{in} \ \Omega, \\
& u=h\quad \rm{on}\ \partial\Omega. 
\end{cases}
\end{equation*}
\end{problem}
In Problem \ref{OP}, $H$ is the Hamiltonian and its Legendre transform, given by
\[
L(v)=\sup_p \{ -p\cdot v-H(x,p)\},
\]
gives the agent's cost of movement at speed  $v$;
  the potential, $V$, determines spatial preferences of each agent, and the coupling, $g$, encodes the interactions between agents and the mean field. 
  When agents leave the domain
  through a point $x\in \partial \Omega$, they incur a charge $h(x)$. 
 Finally, the source, $\phi$,
 represents an incoming flow of agents replacing the ones leaving through $\partial \Omega$ or due to the unit discount encoded in 
 the term $-u$ in the Hamilton--Jacobi equation and in the term $m$ in the transport equation.
  We note that $m$ is the density of the distribution of the agents and $u(x)$ the value function of an agent in the state $x$.

In general, Hamilton--Jacobi equations with Dirichlet boundary conditions do not admit continuous
solutions up to the boundary. This fact can be illustrated by the equation
\[
u'(x)=0
\]
for $0<x<1$ 
with $u(0)=0$ and $u(1)=1$. However, keeping the boundary conditions and coupling the previous equation with a transport equation
\[
\begin{cases}
u'(x)=m(x)\\
m'(x)=0,
\end{cases}
\]
we obtain a model that has a unique solution continuous up to the boundary,
$(u,m)=(x,1)$. This model motivates our main result, the existence of solutions for Problem \ref{OP} satisfying the Dirichlet boundary condition in the trace sense.

MFGs have been studied intensively, see, for example, the monograph \cite{GPV}, 
the surveys \cite{achdou2013finite}, \cite{GS},
the note \cite{Cardialaguet10}, and the lectures \cite{LCDF}. 
Because there are few known explicit solutions for stationary MFGs
\cite{Gomes2016b}, \cite{GNPr216}, and \cite{EvGomNur17}, a substantial effort has been undertaken
to develop numerical methods
\cite{almulla2017two} and \cite{Briceno-Arias}, find special transformations \cite{MR3377677}, 
and to establish the existence of solutions.  
The existence of solution for
second-order, stationary MFGs without congestion was investigated in \cite{GPM1}, \cite{GPatVrt}, \cite{PV15}, \cite{bocorsporr}, and \cite{2016arXiv161107187C}; problems with congestion were examined in 
\cite{EvGom}, \cite{GMit}. The theory for first-order MFGs is less developed. 
The existence of solutions for first or second-order stationary MFGs was examined in 
\cite{FG2} (also see \cite{almayouf2016existence}) using monotone operators and, using a variational approach, 
certain first-order MFGs with congestion were examined in 
\cite{EFGNV2017}.

For first-order MFGs and often for second-order MFGs, existing publications consider only 
periodic boundary conditions. However, in applications, MFGs with boundary conditions are quite natural: for example, Dirichlet boundary conditions arise in models where agents can
leave the domain and are charged an exit fee.   
Here, 
our goal is to prove the existence of weak solutions for the first-order stationary monotone MFG with Dirichlet boundary conditions.  
Thus, we introduce a notion of weak solutions to Problem~\ref{OP} similar to the one considered in \cite{FG2}. Here, however, we account for the Dirichlet boundary conditions. We recall that in \cite{FG2}, the authors consider stationary monotone MFGs with periodic boundary conditions, and their notion of weak solutions is induced by monotonicity. Monotonicity plays an essential role in 
the uniqueness of solutions \cite{LCDF}, and in its absence, the theory becomes substantially harder, 
see the examples in \cite{Gomes2016b}, and the non-monotone second-order MFGs in \cite{cirant2},  \cite{cirant3}, and \cite{FrS}. 

Throughout this paper, $k \in \Nn$ is a fixed natural number such that  $2k> \frac{d}{2}+2$, and   $\Aa$ and $H_h^{2k}(\Omega)  $ are the sets given, respectively, by
\begin{equation}\label{DAa}
\Aa:= \Big\{m \in H^{2k}(\Omega)\ |\ m\geq 0\Big\}
\end{equation}
and
\begin{equation}\label{DHh2k}
H_h^{2k}(\Omega):=\Big\{w\in H^{2k}(\Omega)\ |\ w-h\in H_0^{2k}(\Omega) \Big\}.
\end{equation}
%
%
\begin{definition}\label{DOPWS1}
A weak solution to Problem \ref{OP} is a pair $(m,u) \in L^1(\Omega) \times W^{1,\gamma}(\Omega)$ satisfying
\begin{flalign}
({\rm D1})\enspace & \, u=h \ \mbox{on }\partial\Omega,\enspace m \geq 0 \ \mbox{in }\Omega, &\nonumber \\ 
({\rm D2})\enspace & \, \left<
   F
  \begin{bmatrix}
      \eta \\
      v
  \end{bmatrix},
  \begin{bmatrix}
      \eta  \\
      v
  \end{bmatrix}
  -
  \begin{bmatrix}
      m  \\
      u
  \end{bmatrix}
\right> \geq 0
\quad \mbox{for all}\ (\eta, v)\in \Aa \times H_h^{2k}(\Omega),&\nonumber
\end{flalign}
where, for $(\eta, v) \in H^{2k}(\Omega) \times H^{2k}(\Omega)$ fixed, $F[\eta, v]: L^1(\Omega) \times L^1(\Omega) \to \Rr$ is the functional given by
\begin{equation}\label{DRF2}
\begin{aligned}
\left<
   F
  \begin{bmatrix}
      \eta \\
      v
  \end{bmatrix},
  \begin{bmatrix}
      w_1 \\
      w_2
  \end{bmatrix}
\right>
:=& 
\int_{\Omega}\big(-v-H(x,Dv) +g(\eta) -V\big)w_1 \, \dx
\\ &\quad+\int_{\Omega}\Big(\eta -\div\big(\eta D_pH(x,Dv)\big)-\phi\Big)w_2 \, \dx .
\end{aligned}
\end{equation}
\end{definition}
Next, we state our main theorem that, under the assumptions detailed in Section~\ref{Ass}, establishes the existence of weak solutions to Problem~\ref{OP}.
\begin{teo}\label{TOP}
Consider Problem \ref{OP} and suppose 
that Assumptions \ref{Hcoer}--\ref{Hmono} hold. Then, there exists a weak solution $(m,u)\in L^1(\Omega) \times W^{1,\gamma}(\Omega)$ to Problem \ref{OP} in the sense of Definition~\ref{DOPWS1}.
\end{teo}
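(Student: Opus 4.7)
My plan follows the strategy announced in the abstract: introduce a monotone, higher-order elliptic regularization of Problem \ref{OP}, solve the regularized system by Schaefer's fixed-point theorem, and then pass to the limit via Minty's method, taking care that the Dirichlet trace $u=h$ on $\partial\Omega$ is preserved throughout. The choice $2k>d/2+2$ is crucial here, as it guarantees $H^{2k}(\Omega)\hookrightarrow C^2(\overline{\Omega})$ continuously, so the nonlinear coefficients $g(\eta)$, $H(x,Dv)$, $D_pH(x,Dv)$ act continuously on test pairs drawn from $\Aa\times H_h^{2k}(\Omega)$, and the transport term $\div(\eta D_pH(x,Dv))$ is classical. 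This high regularity is what makes the regularized problem amenable to linearization and fixed-point arguments.

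For the regularization, I would, for each $\epsilon>0$, consider the system
\begin{equation*}
\begin{cases}
\epsilon\Ll u-u-H(x,Du)+g(m)-V=0 & \text{in }\Omega,\\
\epsilon\Ll m+m-\div\bigl(mD_pH(x,Du)\bigr)-\phi=0 & \text{in }\Omega,
\end{cases}
\end{equation*}
where $\Ll$ is a self-adjoint, coercive linear elliptic operator of order $4k$ (e.g.\ $(I-\Delta)^{2k}$), complemented with $u=h$, $m=0$ on $\partial\Omega$ together with vanishing normal derivatives up to order $2k-1$, so the solutions lie in $H_h^{2k}(\Omega)\times H_0^{2k}(\Omega)$. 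Existence for this regularized problem is obtained by freezing the nonlinearities: given $(\bar m,\bar u)$ in a closed, convex subset of $\Aa\times H_h^{2k}(\Omega)$, solve the resulting decoupled linear system to define $T(\bar m,\bar u)=(m,u)$. Classical elliptic theory makes $T$ well-defined, continuous and compact (compactness by Rellich). The a priori bound required by Schaefer's theorem for fixed points of $tT$, $t\in[0,1]$, comes from the monotonicity Assumption~\ref{Hmono}: testing the two equations against $m$ and $u-h$ respectively and subtracting, the cross terms cancel and the monotone structure, together with the coercivity of $H$, controls the $\epsilon$-coercive $H^{2k}$-norm of $(m,u)$. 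The sign condition $m\geq 0$ is preserved by testing the $m$-equation against $m^-$ and using that $g$ is increasing.

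With the regularized solution $(m_\epsilon,u_\epsilon)$ in hand, monotonicity of $F$ combined with the regularized equation gives, for every $(\eta,v)\in \Aa\times H_h^{2k}(\Omega)$,
\begin{equation*}
\bigl\langle F[\eta,v],\,(\eta,v)-(m_\epsilon,u_\epsilon)\bigr\rangle \geq -\epsilon\bigl\langle \Ll(m_\epsilon,u_\epsilon),\,(\eta,v)-(m_\epsilon,u_\epsilon)\bigr\rangle.
\end{equation*}
The right-hand side tends to $0$ as $\epsilon\to 0^+$ thanks to the uniform bounds on $(m_\epsilon,u_\epsilon)$. Extracting weak limits $m_\epsilon\rightharpoonup m$ in $L^1(\Omega)$ and $u_\epsilon\rightharpoonup u$ in $W^{1,\gamma}(\Omega)$, with $m\geq 0$ a.e.\ and the Dirichlet trace $u=h$ on $\partial\Omega$ inherited from $H_h^{2k}(\Omega)$, and using the strong continuity of $\langle F[\eta,v],\cdot\rangle$ in the $L^1\times L^1$ duality against smooth test pairs, one obtains (D2); (D1) is immediate from the construction. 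Minty's trick is precisely what avoids any pointwise identification of the nonlinearities $H(x,Du)$ and $\div(mD_pH(x,Du))$ in the limit.

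The main obstacle I anticipate is obtaining a priori estimates uniformly in $\epsilon$ in a form that keeps $u\in W^{1,\gamma}(\Omega)$ and $m\in L^1(\Omega)$ in the limit while preserving the trace $u=h$ on $\partial\Omega$. This forces the regularization to be a Sobolev-type perturbation (rather than, say, convolution smoothing) and requires a delicate balance between the coercivity of $H$ (delivering an $L^\gamma$-bound on $Du$), the strict monotonicity of $g$ (delivering integrability of $m$), and the discount terms $-u$ and $m$ (needed to close the estimate when pairing against the test $(m,u-h)$). Once these uniform estimates are established, the weak convergence together with Minty's passage yields the weak solution in the sense of Definition~\ref{DOPWS1}.
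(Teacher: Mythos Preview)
Your overall roadmap (regularize, Schaefer, Minty) matches the paper's, but the regularization you propose is placed in the wrong variables and this breaks both the monotonicity and the a~priori estimate you need. You add $\epsilon\Ll u$ to the Hamilton--Jacobi equation and $\epsilon\Ll m$ to the transport equation. Recall from \eqref{DRF2} that the first component of $F$ is tested against the $m$-direction and the second against the $u$-direction. With your choice, the extra $\epsilon$-contribution to $\big\langle F_\epsilon[\eta_1,v_1]-F_\epsilon[\eta_2,v_2],(\eta_1-\eta_2,v_1-v_2)\big\rangle$ is
\[
\epsilon\big\langle \Ll(v_1-v_2),\eta_1-\eta_2\big\rangle+\epsilon\big\langle \Ll(\eta_1-\eta_2),v_1-v_2\big\rangle,
\]
a cross term with no sign. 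Likewise, when you test the first equation against $m-\phi$ and the second against $u-h$ (the computation you invoke for the Schaefer bound), the $\epsilon$-terms produce $\epsilon\langle\Ll u,m-\phi\rangle+\epsilon\langle\Ll m,u-h\rangle$, again not coercive. The paper's Problem~\ref{MP} does the opposite: it inserts $\epsilon(m+\Delta^{2k}m)$ into the Hamilton--Jacobi equation and $\epsilon(u+\xi+\Delta^{2k}(u+\xi))$ into the transport equation. Then the same test yields exactly $\epsilon\big(\Vert m\Vert^2+\Vert u\Vert^2+\sum_{|\alpha|=2k}\Vert\partial^\alpha m\Vert^2+\Vert\partial^\alpha u\Vert^2\big)$, which both preserves monotonicity (Lemma~\ref{MORF}) and gives the uniform bound $\Vert\sqrt{\epsilon}(m,u)\Vert_{H^{2k}}\leq C$ (Corollary~\ref{aprisqrt}) that makes the error term in the Minty step vanish.

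There is a second genuine gap: your argument for $m\geq 0$ by ``testing the $m$-equation against $m^-$'' does not work for an operator of order $4k\geq 4$, since no maximum principle is available and $\langle\Ll m,m^-\rangle$ has no sign. The paper circumvents this by \emph{not} solving the first equation as an equation at all: it builds $m$ as the minimizer of a strictly convex functional over the convex set $\Aa=\{m\in H^{2k}:m\geq 0\}$ (Section~\ref{VP}), so positivity is enforced as a constraint and the Euler--Lagrange condition becomes the variational inequality (E2) rather than an equality. This is why the paper's notion of weak solution to the regularized problem (Definition~\ref{DMPWS1}) has an inequality in (E2); without this device the positivity of $m_\epsilon$ is unclear.
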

In \cite{FG2}, the method of continuity was used to prove the existence of a weak solution to stationary monotone MFGs with periodic boundary conditions. Here, we use a different approach: we apply 
Schaeffer's fixed-point theorem and extend the results in \cite{FG2} to Dirichlet boundary conditions.

To prove Theorem~\ref{TOP}, we introduce a regularized problem, Problem~\ref{MP}, that we believe to be of interest on its own. This regularized problem preserves the monotonicity of the original MFG
in the sense of Assumption~\ref{Hmono}
(see Section~2 and Lemma~\ref{MORF}).
Note that the choice of boundary conditions is critical to preserve monotonicity. 
Moreover, because of the regularizing terms (see the $\epsilon$-terms in \eqref{HighdRP} below), it is simpler to prove the existence and uniqueness of weak solutions to this problem. Then, by letting $\epsilon\to 0$, we can construct a weak solution to Problem~\ref{OP} (see Section~\ref{PfMT2}).
\begin{problem}\label{MP}
Let $\Omega$ be an open and bounded set with smooth boundary, $\partial \Omega$, and outward pointing unit normal vector $\bf{n}$.
Let $ V, \phi, \xi, h \in C^{\infty}(\overline\Omega)$, $g\in C^\infty(\Rr_0^+)$, and $H\in C^\infty(\overline\Omega\times\Rr^d)$ be such that $g$ is increasing, $\phi \geq 0$ in $\Omega$, and $\int_{\Omega} \phi \, \dx=1$. Fix $\epsilon\in(0,1)$. Find $(m, u) \in H^{2k}(\Omega) \times H^{2k}(\Omega)$ satisfying $m\geq 0$ and
\begin{equation}\label{HighdRP}
\begin{cases}
&-u - H(x,Du) +g(m) -V(x)+ \epsilon \big(m + \Delta^{2k}m\big) = 0 \quad {\rm in}\  \Omega, \\
& m - \div\big(mD_pH(x,Du)\big) - \phi + \epsilon \big(u +\xi +\Delta^{2k}(u+\xi) \big)= 0 \quad {\rm in} \ \Omega, \\
& 
\frac{\partial}{\partial \bf{n}}\partial^{\alpha}\Delta^{i} m=0\ 
{\rm on}\  \partial \Omega\ 
{\rm for\ all}\ \alpha\in\Nn_0^d\ {\rm and}\ i \in\Nn_0\ {\rm such\ that}\ 
|\alpha|+i=2k-1,\\ 
&\partial^\beta u
=\partial^\beta h
\ \ 
{\rm on}\ \partial\Omega\ 
{\rm for\ all}\ \beta\in\Nn_0^d\ {\rm and}\ \mbox{such\ that}\ |\beta|\leq 2k-1.
\end{cases}
\end{equation}
\end{problem}
%
%
In the preceding problem, $\xi$ is a technical term used to cancel the boundary conditions in $u$ 
so that we can work with vanishing Dirichlet boundary conditions, see Section~\ref{PfMT1}.
Since the regularizing terms are of order greater than two, we cannot apply the maximum principle. Thus,  Problem~\ref{MP} may not have classical solutions with $m\geq 0$. Hence, in the following definition, we introduce a notion of weak solution to Problem \ref{MP} that requires positivity 
and relaxes the equality in the Hamilton--Jacobi equation. This definition is related to the ones in 
\cite{cgbt} and in \cite{FG2}, where $u$ is only required to be a subsolution of the Hamilton--Jacobi equation.  
%
\begin{definition}\label{DMPWS1}
A weak solution to Problem \ref{MP} is a pair $(m,u) \in H^{2k}(\Omega) \times H^{2k}(\Omega)$ satisfying, for all $w\in \Aa$ and $v \in H_0^{2k}(\Omega)$,
\begin{flalign*}
 ({\rm E1})\enspace & \,  u\in H_h^{2k}(\Omega),\ m\geq 0\ \mbox{in}\  \Omega, &\\
 \begin{split}  ({\rm E2})\enspace &  \int_\Omega \big(  -u - H(x,Du) +g(m) -  V \big)(w-m) \, \dx \\ 
&\qquad +\int_{\Omega} \Big[ \epsilon m(w-m)+\epsilon \sum_{|\alpha|=2k}\partial^\alpha
m(\partial^\alpha w-\partial^\alpha m)\Big]\, \dx \geq 0,\end{split} &\\
\begin{split}  ({\rm E3})\enspace&   \int_{\Omega} \big(m -\div\big(m D_p H(x,Du)\big)-\phi \big)v\,\dx  \\
&\qquad+\int_\Omega \Big[\epsilon \Big(uv + \sum_{|\alpha|=2k}\partial^\alpha u\partial^\alpha v\Big)
+\epsilon (\xi+\Delta^{2k} \xi) v \Big]\, \dx=0.
\end{split} &
\end{flalign*}
\end{definition}
%
%
\begin{remark}\label{rmkmEL}
Assume that $(m,u)$ is a weak solution to Problem~\ref{MP}. Let $\Omega' : = \{x \in \Omega\ |\  m(x)>0 \}$, and fix $w_1 \in C_c^\infty(\Omega')$. For all $\tau\in \Rr$ with $|\tau|$ sufficiently small, we have $w=m+\tau w_1\in \Aa$.
Then, from (E2), we obtain
\begin{equation*}
 \tau\int_\Omega \big(-u - H(x,Du) +g(m) - 
V \big)w_1 \, \dx+ \tau\int_{\Omega} \big( \epsilon mw_1+\epsilon \sum_{|\alpha|=2k}\partial^\alpha
m\partial^\alpha w_1\big)\, \dx \geq 0
.
\end{equation*}
Because the sign of $\tau$ is arbitrary, we conclude that $m$ satisfies
\begin{equation*}
- u  - H(x,Du) + g(m)-V(x) +\epsilon (m + \Delta^{2k}m) =0  \ \mbox{ pointwise\ in} \ \Omega'.
\end{equation*}
Moreover, let $w_2\in C_c^\infty(\Omega)$ be such that $w_2\geq 0$; taking $w=m+w_2\in \Aa$ in (E2) and integrating by parts, we obtain
\begin{equation*} 
\tau\int_\Omega \big(  -u - H(x,Du) +g(m) - 
V(x)  \big)w_2 \, \dx+\tau\int_{\Omega} \big( \epsilon mw_2+\epsilon \Delta^{2k}m w_2\big)\, \dx\geq 0
.
\end{equation*}
Thus, 
\begin{equation*}
- u  - H(x,Du) + g(m) - 
V(x)+\epsilon (m + \Delta^{2k}m) \geq 0  \ \mbox{  in the sense of distributions
in} \ \Omega.
\end{equation*}
Also, by (E3), we have
\begin{equation*}
m - \div\big(mD_pH(x,Du)\big) - \phi
+ \epsilon \big(u +\xi +\Delta^{2k}(u+\xi)
\big)= 0  \ \mbox{  in the sense of distributions
in} \ \Omega.
\end{equation*}
\end{remark}
Under the same assumptions of Theorem \ref{TOP}, we prove the existence and uniqueness of the weak solutions to Problem \ref{MP}.
\begin{teo}\label{MT}
Consider Problem \ref{MP} and suppose that Assumptions \ref{Hcoer}--\ref{AWC1} and \ref{Hmono} hold for some $\gamma>1$. Then, there exists a unique weak solution $(m,u) \in H^{2k}(\Omega)\times H^{2k}(\Omega)$ to Problem \ref{MP} in the sense of Definition~\ref{DMPWS1}.
\end{teo}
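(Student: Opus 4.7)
The plan is to establish Theorem~\ref{MT} by combining Schaefer's fixed-point theorem for existence with a monotonicity argument for uniqueness. Starting with uniqueness, suppose $(m_i,u_i)$, $i=1,2$, are two weak solutions. I would test (E2) for $(m_1,u_1)$ against $w=m_2$ and (E2) for $(m_2,u_2)$ against $w=m_1$ and add the two; in parallel, I would test (E3) for both pairs against $v=u_1-u_2\in H_0^{2k}(\Omega)$ and subtract the two. Integrating by parts in the divergence term (boundary contributions vanish because $v$ has zero high-order trace) and then taking the difference of the two composite relations, the $(u_1-u_2)(m_1-m_2)$ cross-terms coming from $-u$ in the HJ part and $m$ in the transport part cancel, and the surviving non-$\epsilon$ contribution reassembles (up to sign) the monotone pairing
\[
\bigl\langle F[m_1,u_1]-F[m_2,u_2],\,(m_1-m_2,u_1-u_2)\bigr\rangle \geq 0
\]
guaranteed by Assumption~\ref{Hmono}. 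Balanced against it is the $\epsilon$-contribution
\[
\epsilon\|m_1-m_2\|_{L^2}^2+\epsilon\|u_1-u_2\|_{L^2}^2+\epsilon\!\!\sum_{|\alpha|=2k}\!\!\bigl(\|\partial^\alpha(m_1-m_2)\|_{L^2}^2+\|\partial^\alpha(u_1-u_2)\|_{L^2}^2\bigr),
\]
which the combined inequality forces to vanish; together with $u_1=u_2=h$ on $\partial\Omega$ and Poincaré on $H_0^{2k}(\Omega)$, this yields $(m_1,u_1)=(m_2,u_2)$ in $H^{2k}(\Omega)\times H^{2k}(\Omega)$.

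For existence, I construct a two-step solution operator $T\colon H^{2k}(\Omega)\to H_h^{2k}(\Omega)$. Given $\bar u$, first let $m=S(\bar u)\in\Aa$ be the unique solution of the strictly monotone coercive variational inequality obtained from (E2) with $\bar u$ frozen in place of $u$; this exists and is unique by the Lions--Stampacchia theorem, since $g$ is increasing, the $\epsilon$-terms provide strict monotonicity and top-order $H^{2k}$-coercivity, and the Sobolev embedding $H^{2k}(\Omega)\hookrightarrow C^2(\overline\Omega)$ (valid under $2k>d/2+2$) controls the frozen Hamiltonian term. Next, let $u=T(\bar u)\in H_h^{2k}(\Omega)$ be the unique solution of the linear elliptic problem obtained from (E3) with $\bar u$ frozen in $D_pH(x,D\bar u)$; after lifting the Dirichlet data through $h$ to a homogeneous problem on $H_0^{2k}(\Omega)$, the symmetric bilinear form $\int_\Omega[\epsilon uv+\epsilon\sum_{|\alpha|=2k}\partial^\alpha u\,\partial^\alpha v]\,\dx$ is coercive on $H_0^{2k}(\Omega)$ (by Poincaré applied to the vanishing high-order traces), so Lax--Milgram applies. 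Fixed points of $T$ are precisely weak solutions of Problem~\ref{MP}, and continuity and compactness of $T$ follow because its nonlinear dependence on $\bar u$ enters only through $\bar u$ and $D\bar u$, which depend continuously on $\bar u$ in a strictly weaker topology via the compact embedding of $H^{2k}(\Omega)$ into $C^2(\overline\Omega)$.

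To apply Schaefer's theorem, I must establish a uniform bound in $H^{2k}(\Omega)$ on all solutions of $u=\lambda T(u)$ for $\lambda\in[0,1]$. This is the main technical obstacle. The strategy is to test the $\lambda$-scaled version of (E2) against $w=0\in\Aa$ and the $\lambda$-scaled version of (E3) against $v=u-h\in H_0^{2k}(\Omega)$, combine the two, and reassemble the nonnegative monotone pairing $\langle F[m,u]-F[0,h],(m,u-h)\rangle\geq 0$; the $\epsilon$-contributions then coercively dominate $\|m\|_{H^{2k}}$ and $\|u-h\|_{H^{2k}}$, absorbing the $\gamma$-growth of $H$ via Assumption~\ref{Hcoer} and controlling the boundary-inhomogeneous pieces through the smoothness of $h$ and $\xi$. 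Once this a priori bound is secured, Schaefer's theorem delivers a fixed point of $T$, completing the existence proof.
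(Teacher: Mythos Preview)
Your uniqueness argument is essentially identical to the paper's: subtract the two copies of (E3) tested with $v=u_1-u_2$, add the two copies of (E2) tested with the opposite $m$'s, and invoke Assumption~\ref{Hmono} to force the $\epsilon$-quadratic form to vanish.

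Your existence scheme is correct in spirit but genuinely different from the paper's. The paper builds a map $A$ on \emph{pairs} $(m_0,u_0)\in\widetilde\Aa\times H^{2k-1}(\Omega)$, freezing \emph{both} coordinates in the sub-problems: the $m$-step is the quadratic minimization~\eqref{VP1} (with $g(m_0)$ frozen), and the $u$-step is the bilinear problem~\eqref{BL1}. Its Schaefer a~priori bound does \emph{not} use Assumption~\ref{Hmono} at all; instead it repeats the structural estimate of Corollary~\ref{apriWS1} (test with $w=\phi$ and $v=u-h$, use Assumptions~\ref{Hcoer}--\ref{AWC1}) for the $\lambda$-rescaled system. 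Your scheme iterates on $u$ alone, keeps $g(m)$ live in a nonlinear variational inequality solved by Lions--Stampacchia, and derives the a~priori bound from the monotone pairing $\langle F[m,u]-F[0,h],(m,u-h)\rangle\geq0$. This is slicker for fixed $\epsilon$ (the $\epsilon$-terms immediately dominate a linear right-hand side), but it ties existence to Assumption~\ref{Hmono}, whereas the paper's route isolates Assumptions~\ref{Hcoer}--\ref{AWC1} as sufficient for existence and reserves~\ref{Hmono} for uniqueness. The reference to Assumption~\ref{Hcoer} in your bound is superfluous once you commit to the monotone-pairing route.

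There is one genuine gap you should patch. With $h\not\equiv0$, the target $H_h^{2k}(\Omega)$ is an affine space that is not stable under the scaling $u\mapsto\lambda u$; hence for a Schaefer solution $u=\lambda T(u)$ you have $u|_{\partial\Omega}=\lambda h$, so $u-h\notin H_0^{2k}(\Omega)$ and you cannot legally use it as test function in (E3), nor can you invoke Assumption~\ref{Hmono} (which is stated on $\Aa\times H_h^{2}(\Omega)$). Also note that in your setup (E2) is satisfied \emph{exactly} by $m=S(u)$ with no $\lambda$; only (E3) carries the $\lambda$. The paper handles all of this by first reducing to $h\equiv0$ through the change of variables $\hat u=u-h$, $\widehat H(x,p)=H(x,p+Dh)$, $\widehat V=V+h$, $\widehat\xi=\xi+h$, and running Schaefer on $H_0^{2k}(\Omega)$ (see the proof of Theorem~\ref{MT} and Proposition~\ref{ExiUniS1}). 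Once you insert that reduction, your argument goes through: with $h=0$ one has $u=\lambda T(u)\in H_0^{2k}(\Omega)$, the test $v=u$ is admissible, multiplying the exact (E2) (with $w=0$) by $\lambda$ and adding the $\lambda$-scaled (E3) yields $\lambda\langle F[m,u],(m,u)\rangle$ on the left, and Assumption~\ref{Hmono} against $(0,0)$ gives the uniform bound $\epsilon\Vert u\Vert_{H^{2k}}^2\leq C$ independent of $\lambda$.
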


We begin by addressing Theorem~\ref{MT}. First, in Section~\ref{Prows}, we prove a priori estimates for classical and weak solutions to Problem~\ref{MP}. Second, in Sections~\ref{VP} and \ref{BF}, we introduce two auxiliary problems: a variational problem whose Euler--Lagrange equation is the first equation in Problem~\ref{MP} and a problem associated with a bilinear form corresponding to the second equation in that problem. In each of these two sections, we show that there exists a unique solution. Finally, in Section~\ref{PfMT1}, we prove Theorem~\ref{MT} using  Schaefer's fixed-point theorem together with the results established in Sections~\ref{Prows}--\ref{BF}. Finally, the proof
of 
Theorem~\ref{TOP} is given in Section \ref{PfMT2} using Minty's method. 


\section{Assumptions}\label{Ass}
To prove our main results, we need the following assumptions on the 
functions that arise in Problems \ref{OP} and \ref{MP}. 
The first three assumptions prescribe standard growth conditions on the Hamiltonian, $H$. For instance, \[H(x,p)=a(x)(1+|p|^2)^{\frac{\gamma}{2}}+ b(x)\cdot p,\] where $a\in C(\overline \Omega)$, $a(x)>0$ for all $x\in\overline\Omega$, and $b:\overline\Omega \to\Rr^{d}$ is a $C^{\infty}$ function satisfies our assumptions. 
\begin{hyp}\label{Hcoer}
There exists a constant, $C>0$, such that, for all $(x,p)\in \Omega\times\Rr^d$,
\begin{equation*}
\begin{aligned}
-H(x,p)+ D_pH(x,p)\cdot p\geq \frac{1}{C}|p|^\gamma -C.
\end{aligned}
\end{equation*}

\end{hyp}


\begin{hyp}\label{Hbdd}
There exists a constant, $C>0$, such that, for all $(x,p)\in \Omega\times\Rr^d$,
\begin{equation*}
\begin{aligned}
H(x,p)
\geq 
\frac{1}{C} |p|^\gamma -C.
\end{aligned}
\end{equation*}
\end{hyp}
\begin{hyp}\label{Bderi}
There exists a constant, $C>0$, such that, for all $(x,p)\in \Omega\times\Rr^d$,
\begin{equation*}
|D_pH(x,p)|\leq C|p|^{\gamma-1}+C .
\end{equation*}
\end{hyp}
The next three assumptions impose growth conditions on $g$. For example, these are satisfied for $g(m)=m^\alpha,\alpha>0$, or by $g(m)=\ln m$ (apart from small changes). 
\begin{hyp}\label{gint}
The function $g$ is increasing in $\Rr^+_0$. Moreover, for all $\delta>0$, there exists a constant, $C_\delta>0$, such that, for all $m \in L^1(\Omega)$,
\begin{equation*}
\max\bigg\{\int_\Omega |g(m)|\, \dx,
\int_\Omega m \,\dx \bigg\}
\leq
\delta \int_\Omega mg(m)\dx + C_\delta.
\end{equation*}
\end{hyp}
\begin{hyp}\label{AWC1}
There exists a constant, $C>0$, such that, for all $m\in L^1(\Omega)$,
\begin{equation*}
\int_\Omega mg(m)\, \dx\geq -C.
\end{equation*}
\end{hyp}
\begin{hyp}\label{gwc}
If $\{m_j\}_{j=1}^\infty\subseteq L^1(\Omega)$ is a sequence satisfying 
\begin{equation*}
\sup_{j\in\Nn} \int_\Omega m_jg(m_j)\, \dx <+\infty,
\end{equation*}
then there exists a subsequence of $\{m_{j}\}_{j=1}^\infty$ that converges weakly in $L^1(\Omega)$.
\end{hyp}
\begin{remark}
        If $g$ is an increasing function with $g\geq 0$ and $\lim_{t\to\infty}g(t)=\infty$, then $g$ satisfies Assumption~\ref{gwc}. This fact is a consequence of the {\it De la Vall\'ee Poussin} lemma together with the Dunford--Pettis theorem; see, for instance, Theorems~$2.29$ and~$2.54$ in \cite{FoLe07}.
\end{remark}
%
%
Our final assumption concerns the monotonicity of the functional $F$ introduced in Definition~\ref{DOPWS1}. Monotonicity is the key ingredient in the proof of Theorem \ref{TOP} from 
Theorem \ref{MT} through Minty's method. 
\begin{hyp}\label{Hmono}
The functional $F$ introduced in Definition~\ref{DOPWS1} is monotone with respect to the $L^2\times L^2$-inner product; that is,
for all $(\eta_1, v_1)$, $(\eta_2, v_2) \in \Aa   \times H_h^2(\Omega)$, $F$ satisfies
\begin{equation*}
\left<
   F
  \begin{bmatrix}
      \eta_1  \\
      v_1 
  \end{bmatrix}
  -
   F
  \begin{bmatrix}
      \eta_2  \\
      v_2 
  \end{bmatrix},
  \begin{bmatrix}
      \eta_1  \\
      v_1
  \end{bmatrix}
  -
  \begin{bmatrix}
      \eta_2  \\
      v_2
  \end{bmatrix}
\right>
\geq 0.
\end{equation*}
\end{hyp}
%
%
%
%
%

%
%
\section{Properties of weak solutions}\label{Prows}
In this section, we investigate properties of weak solutions, $(m,u)$, to Problem~\ref{MP}. First, we prove an a priori estimate for classical solutions. Then, we verify that this a priori estimate also holds for weak solutions. Finally, we show that $u$ is bounded in $W^{1,\gamma}(\Omega)$,
and that \( (\sqrt{\epsi} m,  \sqrt{\epsi} u)\) is bounded in
\(H^{2k}(\Omega)\times H^{2k}(\Omega)\). 
In Section~\ref{PfMT1}, we combine
these estimates with  Schaefer's fixed-point theorem to prove the existence of weak solutions to Problem~\ref{MP}.

To simplify the notation, throughout this section, we use the same letter $C$ to denote any positive \textit{constant that depends only on the problem data}; that is, may depend
on \(\Omega\), \(d\), \(\gamma\), $H,V,\phi,\xi$, and $h$,   on the constants in the Assumptions~\ref{Hcoer}--\ref{AWC1}, or
on universal constants such as the constant in Morrey's theorem or  Gagliardo--Nirenberg
interpolation
inequality. 
In particular, these constants do not depend on the particular choice of solutions to Problem~\ref{MP} nor on \(\epsilon\).
\begin{pro}\label{apriori1}
Consider Problem \ref{MP} and suppose that Assumptions~\ref{Hcoer}--\ref{gint} hold for some \(\gamma>1\). Then, there exists a positive constant, $C$, that depends only on the
problem data  such that for any solution $(m, u)$ to Problem~\ref{MP} in the classical sense, we have
\begin{equation}\label{aprimu}
\int_{\Omega} \Big(
mg(m)+\frac{1}{C}m|Du|^\gamma 
+
\frac{1}{C}\phi|Du|^\gamma \Big)\, \dx
+
{\epsilon} \int_{\Omega} \
\Big[
m^2+u^2+\sum_{|\alpha|=2k}\big((\partial^\alpha m)^2+(\partial^\alpha u)^2\big)
 \Big] \, \dx
\leq C. 
\end{equation}
\end{pro}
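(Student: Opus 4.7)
The plan is to test the two equations against complementary functions that exploit the monotone structure the regularization preserves. I would multiply the first equation in \eqref{HighdRP} by $(m-\phi)$, the second by $(u-h)$, integrate each over $\Omega$, and add them. This pair of test functions is natural for three reasons: (i) the bad $-um$ coupling telescopes, since $-(m-\phi)u+(u-h)m=\phi u-hm$; (ii) $u-h\in H_0^{2k}(\Omega)$, so integrations by parts on the transport side produce no boundary contributions, making in particular $-\int_\Omega(u-h)\div\bigl(mD_pH(x,Du)\bigr)\,\dx=\int_\Omega m(Du-Dh)\cdot D_pH(x,Du)\,\dx$ clean; and (iii) the boundary conditions prescribed on $m$ are exactly those needed to eliminate the boundary terms that appear when derivatives are moved off $m$ in the $\Delta^{2k}m$ term.

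Combining the Hamiltonian contributions from (ii) yields $\int_\Omega m(-H+Du\cdot D_pH)\,\dx+\int_\Omega\phi H\,\dx-\int_\Omega m\,Dh\cdot D_pH\,\dx$, and Assumptions~\ref{Hcoer}--\ref{Bderi} (together with Young's inequality to absorb the $Dh$-term) produce the two coercive pieces $\tfrac{1}{C}\int_\Omega m|Du|^\gamma\,\dx$ and $\tfrac{1}{C}\int_\Omega\phi|Du|^\gamma\,\dx$ modulo a $C\int_\Omega m\,\dx+C$ remainder. The $g$-coupling gives $\int_\Omega mg(m)\,\dx-\int_\Omega\phi g(m)\,\dx$; Assumption~\ref{gint} with small $\delta$ then absorbs $\int_\Omega\phi g(m)\,\dx$ and $\int_\Omega m\,\dx$ into $\tfrac12\int_\Omega mg(m)\,\dx+C$, and the linear $V$, $h$, $u\phi$ remainders are controlled directly by the smoothness of the data.

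The regularization is the technical core. On the transport side, $u-h\in H_0^{2k}(\Omega)$ permits Green's identity to be applied $k$ times freely, giving $\epsilon\int_\Omega(u-h)\Delta^{2k}(u+\xi)\,\dx=\epsilon\int_\Omega\Delta^k(u-h)\Delta^k(u+\xi)\,\dx\geq(1-\delta)\epsilon\|\Delta^ku\|_{L^2}^2-C_\delta$ after Young, and similarly $\epsilon\int_\Omega(u-h)(u+\xi)\,\dx\geq(1-\delta)\epsilon\|u\|_{L^2}^2-C_\delta$. On the HJB side, iterated IBPs give $\int_\Omega m\Delta^{2k}m\,\dx=\sum_{|\alpha|=2k}c_\alpha\int_\Omega(\partial^\alpha m)^2\,\dx$ with $c_\alpha>0$, since every boundary term that arises has the form $\int_{\partial\Omega}(\cdot)\,\partial_n\partial^\alpha\Delta^im$ with $|\alpha|+i=2k-1$ and therefore vanishes by the prescribed BCs on $m$; the cross term $\epsilon\int_\Omega\phi\Delta^{2k}m\,\dx$ is rewritten via the same IBPs as a pairing of top-order derivatives of $\phi$ and $m$, and is absorbed into $\delta\epsilon\sum_{|\alpha|=2k}\int_\Omega(\partial^\alpha m)^2\,\dx+C_\delta$ by Cauchy--Schwarz and Young (using $\phi\in C^\infty(\overline\Omega)$); the zeroth-order $\epsilon\int_\Omega\phi m\,\dx$ and the $\xi$, $h$ crossterms are handled identically. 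A higher-order Poincar\'e-type inequality $\|u-h\|_{H^{2k}}\leq C\|\Delta^k(u-h)\|_{L^2}$ for $u-h\in H_0^{2k}(\Omega)$ finally upgrades $\|\Delta^ku\|_{L^2}^2$ to control of $\sum_{|\alpha|=2k}\|\partial^\alpha u\|_{L^2}^2$.

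The main obstacle is the IBP bookkeeping for the regularization: verifying that the unusual boundary conditions on $m$ are precisely those needed to kill every boundary term in the $2k$-fold IBP of $\int_\Omega m\Delta^{2k}m\,\dx$ and its cross term with $\phi$, so that the estimate closes with a constant independent of $\epsilon$. Once this structural verification is in place, the remaining steps are standard applications of Young's inequality, Cauchy--Schwarz, and Assumption~\ref{gint}.
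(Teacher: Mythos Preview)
Your approach is the paper's: test the first equation with $m-\phi$ and the second with $u-h$, add, integrate by parts using the boundary conditions, and close with Assumptions~\ref{Hcoer}--\ref{gint} and Young's inequality. The paper simply records the post-IBP identity \eqref{E3.2} in one line without spelling out the regularization bookkeeping that you (correctly) flag as the technical core; your route for the $u$-side through $\|\Delta^k u\|_{L^2}$ followed by an elliptic upgrade is a harmless variant of writing $\sum_{|\alpha|=2k}(\partial^\alpha u)^2$ directly.

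One slip to fix: in your cross-coupling calculation you track only the $-u$ term from the first equation and the $m$ term from the second, arriving at a leftover $\phi u$ which you then say is ``controlled directly by the smoothness of the data.'' It is not --- $u$ is unknown and the constant must be $\epsilon$-independent, so absorbing $\int_\Omega\phi u\,\dx$ into $\epsilon\|u\|_{L^2}^2$ via Young would cost $C/\epsilon$. The resolution is that the $-\phi$ in the second equation, tested against $u-h$, contributes $-\phi u+\phi h$; this $-\phi u$ cancels your $\phi u$ exactly, and what survives ($-hm$, $\phi h$, $V\phi$, etc.) is genuinely data-controlled, matching the right-hand side of \eqref{E3.2}.
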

%
%
\begin{proof}

Multiplying the first equation in \eqref{HighdRP} by $(m-\phi)$ and the second one by $(u-h)$, 
adding and integrating over $\Omega$, and then using integration by parts and the boundary conditions, we have
\begin{equation}\label{E3.2}
 \begin{split}
  \int_{\Omega} \bigg[
  &m g(m)+m \big(-H(x,Du)+ D_p H(x,Du)\cdot Du\big)+\phi H(x,Du) \\
  &\qquad\qquad\qquad\qquad\qquad\qquad
  +\epsilon \Big(m^2+u^2+\sum_{|\alpha|=2k}\big((\partial^\alpha m)^2+(\partial^\alpha u)^2\big)\Big)
  \bigg] \, \dx
  \\
  = \int_\Omega\bigg[
  &\phi g(m)
  +(\epsilon \phi + V +h) m + ( \epsilon \xi h -V\phi - \phi h) + m D_pH(x,Du)\cdot Dh\\
   &\qquad\quad 
   + \epsilon \Big( u(h-\xi) + \sum_{|\alpha|=2k} \big(\partial^\alpha \phi \partial^\alpha m +  \partial^\alpha u( \partial^\alpha h - \partial^\alpha \xi) +\partial^\alpha \xi\partial^\alpha h\big) \Big) \bigg]\, \dx.
  \end{split}
\end{equation}
By Assumptions~\ref{Hcoer}--\ref{Bderi}, Young's inequality, and the positivity of $m$ and $\phi$, we have
\begin{equation*}
\begin{aligned}
&\int_\Omega m\big(-H(x.Du)+D_pH(x,Du)\cdot Du\big)\, \dx 
\geq 
\int_\Omega\Big( \frac{m|Du|^\gamma}{C} -Cm\Big)\, \dx,\\
&\int_\Omega \phi H(x,Du)\,\dx \geq \int_\Omega \bigg(\frac{\phi |Du|^\gamma}{C}-C\phi \bigg)\,\dx,\\
&\int_\Omega mD_pH(x,Du)\cdot Dh\,\dx
\leq
\int_\Omega Cm(|Du|^{\gamma-1}+1)
\leq
\int_\Omega\bigg( \frac{m|Du|^\gamma}{2C} +C m\bigg) \,\dx.
\end{aligned}
\end{equation*}
Therefore, from Assumption~\ref{gint}, Young's inequality, and \eqref{E3.2}, we obtain
\begin{equation*}
 \begin{split}
   \int_{\Omega} &\bigg[ mg(m) + \frac{m|Du|^\gamma}{C} +  \frac{\phi |Du|^\gamma}{C} 
   + \frac{\epsilon}{2} \Big( m^2 + u^2 + \sum_{|\alpha|=2k}\big((\partial^\alpha m)^2+(\partial^\alpha u)^2\big)\Big)\bigg] \, \dx \\
   \leq 
   \int_{\Omega}&\Big( 
   \phi g(m)+ Cm +  \frac{m|Du|^\gamma}{2C} \Big)\, \dx + C
   \leq 
   \frac{1}{2} \int_{\Omega}\Big( mg(m) + \frac{m|Du|^\gamma}{C} \Big) \, \dx + C,
 \end{split}
\end{equation*}
from which Proposition~\ref{apriori1} follows.
\end{proof}

\begin{cor}\label{apriWS1}
Consider Problem \ref{MP} and suppose that Assumptions~\ref{Hcoer}--\ref{gint} hold for some \(\gamma>1\), and let $(m, u)$ be a weak solution to Problem~\ref{MP} in the sense of Definition~\ref{DMPWS1}. Then, $(m, u)$ satisfies the estimate \eqref{aprimu}. 
\end{cor}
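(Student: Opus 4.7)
I would reproduce the argument of Proposition~\ref{apriori1} at the level of the weak formulation by inserting into (E2)--(E3) the test functions that encode, respectively, the multipliers $(m-\phi)$ and $(u-h)$ used in the classical proof. Concretely, take $w=\phi$ in (E2); this is admissible since $\phi\in C^{\infty}(\overline{\Omega})$ with $\phi\geq 0$, so $\phi\in\Aa$. For (E3), take $v=u-h$, which is admissible because (E1) yields $u-h\in H_0^{2k}(\Omega)$.

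\textbf{Core identity.} With $w=\phi$, inequality (E2) can be multiplied by $-1$ and rewritten as
\begin{equation*}
\int_{\Omega}\bigl(-u-H(x,Du)+g(m)-V\bigr)(m-\phi)\,\dx + \epsilon\int_{\Omega}\Bigl[m(m-\phi)+\sum_{|\alpha|=2k}\partial^{\alpha}m\,(\partial^{\alpha}m-\partial^{\alpha}\phi)\Bigr]\,\dx\leq 0,
\end{equation*}
while (E3) with $v=u-h$ provides the companion equality. Adding these two statements and integrating the term $\int\div(m D_pH(x,Du))(u-h)\,\dx$ by parts (licit because $u-h\in H_0^{2k}(\Omega)$ and, since $2k>d/2+2$, the Sobolev embedding $H^{2k}(\Omega)\hookrightarrow C^{2}(\overline\Omega)$ guarantees $m,D_pH(x,Du)\in C(\overline\Omega)$), I would recover precisely the integrated identity~\eqref{E3.2} of the classical proof, with the single difference that the equality is replaced by ``$\leq$''.

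\textbf{Conclusion.} The remainder of the proof of Proposition~\ref{apriori1} merely uses Assumptions~\ref{Hcoer}--\ref{gint} and Young's inequality to bound the right-hand side of~\eqref{E3.2} from above, together with the coercivity structure of the left-hand side. Every step in that absorption argument applies verbatim when ``$=$'' is weakened to ``$\leq$'', so the estimate~\eqref{aprimu} follows unchanged.

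\textbf{Main obstacle.} I do not expect a serious obstacle. The only point to verify is that the bilinear sums $\sum_{|\alpha|=2k}\partial^{\alpha}m\,\partial^{\alpha}(w-m)$ and $\sum_{|\alpha|=2k}\partial^{\alpha}u\,\partial^{\alpha}v$ encoded in (E2)--(E3) are exactly the post-integration-by-parts counterparts of the top-order regularizers in Problem~\ref{MP}, given the Neumann-type boundary conditions on $m$ and the Dirichlet condition on $u-h$. Since this identification is built into Definition~\ref{DMPWS1}, no further boundary integration by parts is required on the top-order derivatives, and the pointwise Hamiltonian expressions $H(x,Du)$ and $D_pH(x,Du)$ make sense classically by the Sobolev embedding.
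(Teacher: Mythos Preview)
Your proposal is correct and matches the paper's own proof essentially verbatim: the paper also takes $w=\phi\in\Aa$ in (E2) and $v=u-h\in H_0^{2k}(\Omega)$ in (E3), sums the resulting statements to obtain \eqref{E3.2} with ``$=$'' replaced by ``$\leq$'', and then repeats the absorption argument of Proposition~\ref{apriori1}.
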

\begin{proof}
Let
$(m, u)$ be a weak solution to Problem~\ref{MP} in the sense of Definition~\ref{DMPWS1}. Taking $v = u- h \in H_0^{2k}(\Omega)$ and $w= \phi \in \Aa$ in  (E2) and  (E3) in Definition~\ref{DMPWS1}, and then summing the resulting inequalities, we obtain \eqref{E3.2} with ``$=$'' replaced by ``$\leq$". Thus, arguing as in Proposition~\ref{apriori1}, we conclude that $(m, u)$ satisfies  \eqref{aprimu}.
\end{proof}

\begin{cor}\label{apriDu1}
Consider Problem \ref{MP} and suppose that Assumptions~\ref{Hcoer}--\ref{AWC1} hold for some \(\gamma>1\). Then, there exists a positive constant, $C$, that depends only on the
problem data such that for any weak solution $(m,u)$ to Problem~\ref{MP}, we have \(\Vert u\Vert_{W^{1,\gamma}(\Omega)}\leq C\).
\end{cor}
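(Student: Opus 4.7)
The starting point is Corollary~\ref{apriWS1}, which gives uniform (in $\epsilon$) control of $\int_\Omega mg(m)\,\dx$ and of the weighted gradient quantity $\int_\Omega(m+\phi)|Du|^\gamma\,\dx$; combining this with Assumptions~\ref{gint} and~\ref{AWC1} further yields uniform bounds on $\int_\Omega|g(m)|\,\dx$ and $\int_\Omega m\,\dx$. The weighted $Du$-bound, however, degenerates where $m+\phi$ vanishes, so it cannot be converted directly into an unweighted $L^\gamma$-estimate. My plan is to extract an \emph{integrated} Hamilton--Jacobi inequality from the variational formulation (E2), and then to close using the classical Poincar\'e inequality together with the boundary information encoded in (E1).

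The key step is to test (E2) against $w=m+c$, where $c>0$ is an arbitrary positive constant. Since $m\geq 0$ and constants belong to $H^{2k}(\Omega)$, one has $w\in\Aa$; moreover $w-m=c$ and $\partial^\alpha(w-m)=0$ for every multi-index with $|\alpha|=2k\geq 1$, so the top-order $\epsilon$-term in (E2) vanishes. Dividing the resulting inequality by $c$ leaves
\begin{equation*}
\int_\Omega\big(-u-H(x,Du)+g(m)-V+\epsilon m\big)\,\dx\geq 0.
\end{equation*}
Combining with the coercivity $H(x,p)\geq\frac{1}{C}|p|^\gamma-C$ from Assumption~\ref{Hbdd} and absorbing the $V$, $g(m)$, and $\epsilon m$ terms by the uniform bounds recalled above,
\begin{equation*}
\frac{1}{C}\|Du\|_{L^\gamma(\Omega)}^\gamma \leq C+\int_\Omega|u|\,\dx.
\end{equation*}

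To absorb the remaining $u$-integral, I use (E1): $u-h\in H_0^{2k}(\Omega)$, and since $2k>\frac{d}{2}+2$ the Sobolev embedding yields $H_0^{2k}(\Omega)\hookrightarrow W_0^{1,\gamma}(\Omega)$ on the bounded Lipschitz domain $\Omega$. The classical Poincar\'e inequality applied to $u-h$ (with a constant independent of $\epsilon$) then gives
\begin{equation*}
\|u\|_{L^\gamma(\Omega)}\leq \|u-h\|_{L^\gamma(\Omega)}+\|h\|_{L^\gamma(\Omega)}\leq C\|Du\|_{L^\gamma(\Omega)}+C,
\end{equation*}
so H\"older's inequality yields $\int_\Omega|u|\,\dx\leq C+C\|Du\|_{L^\gamma(\Omega)}$. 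Substituting back,
\begin{equation*}
\frac{1}{C}\|Du\|_{L^\gamma(\Omega)}^\gamma\leq C+C\|Du\|_{L^\gamma(\Omega)},
\end{equation*}
and since $\gamma>1$ this forces $\|Du\|_{L^\gamma(\Omega)}\leq C$. Combined with the Poincar\'e bound above, this delivers $\|u\|_{W^{1,\gamma}(\Omega)}\leq C$.

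The only real obstacle is conceptual rather than technical: one must spot that the correct test function in (E2) is the constant perturbation $w=m+c$. It is precisely this choice that simultaneously kills the top-order regularising term (because $\partial^\alpha c=0$ for $|\alpha|=2k\geq 1$) and decouples the $|Du|^\gamma$ integral from the degenerate weight $m$, producing the unweighted integrated Hamilton--Jacobi inequality that is missing from Proposition~\ref{apriori1} and that is exactly what is needed to upgrade the weighted estimate to a genuine $W^{1,\gamma}$ bound.
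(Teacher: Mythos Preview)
Your proof is correct and follows essentially the same route as the paper: the paper also tests (E2) with $w=m+1\in\Aa$ to obtain the integrated Hamilton--Jacobi inequality $\int_\Omega H(x,Du)\,\dx\leq \int_\Omega(\epsilon m-u+g(m)-V)\,\dx$, then closes via Assumption~\ref{Hbdd} and Poincar\'e's inequality using $u-h=0$ on $\partial\Omega$. The only cosmetic difference is that the paper absorbs the $u$-term via a weighted Young's inequality $\int_\Omega|u|\,\dx\leq\sigma\int_\Omega|u|^\gamma\,\dx+C_\sigma$ with $\sigma$ chosen small, whereas you use H\"older and the superlinearity $\gamma>1$ directly.
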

\begin{proof}
By Corollary~\ref{apriWS1}, the positivity
of \(m\) and \(\phi\), and Assumption~\ref{AWC1}, we have
\begin{equation*}
\begin{aligned}
\bigg|\int_\Omega m g(m)\,\dx \bigg| \leq C.
\end{aligned}
\end{equation*}

On the other hand, using (E2) in Definition~\ref{DMPWS1}
with   \(w:=m
+1\in \Aa\), \(\epsilon\leq 1\), Assumption~\ref{gint}, the previous estimate, and a weighted Young's inequality,  we obtain
\begin{equation*}
\int_{\Omega} H(x,Du)\, \dx \leq \int_{\Omega}\big( \epsilon m - u +g(m) -V\big)\, \dx
\leq
\sigma\int_{\Omega} |u|^\gamma \, \dx + C _\sigma,
\end{equation*}
where $\sigma>0$ is arbitrary. Moreover, because $u-h=0$ on $\partial \Omega$,  Poincar\'e's inequality yields
 \begin{align}\label{boundonu}
\int_{\Omega} |u|^\gamma \, \dx\leq C 
\int_{\Omega} | Du|^\gamma \, \dx+C.
\end{align}
Then, invoking  Assumption~\ref{Hbdd} and using the estimates above with \(\sigma\) chosen appropriately, we obtain
\begin{equation*}
\int_{\Omega} |Du|^\gamma \, \dx \leq C.
\end{equation*}
Using \eqref{boundonu} once more, we conclude that  \(\Vert u\Vert_{W^{1,\gamma}(\Omega)}\leq C\), where \(C\) is a positive constant that depends only on the
problem data.
 \end{proof}

\begin{cor}\label{aprisqrt}
Consider Problem \ref{MP} and suppose that
Assumptions~\ref{Hcoer}--\ref{AWC1} hold for some \(\gamma>1\). Then, there exists a positive constant,
$C$, that depends only on the
problem data such that for any weak solution $(m,u)$ to Problem~\ref{MP},
we have \(\Vert \sqrt{\epsi} m\Vert_{H^{2k}(\Omega)} + \Vert \sqrt{\epsi} u\Vert_{H^{2k}(\Omega)}\leq C\).
\end{cor}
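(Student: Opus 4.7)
The plan is to reduce this corollary to a direct consequence of Corollary~\ref{apriWS1} combined with a standard interpolation inequality in Sobolev spaces. By Corollary~\ref{apriWS1}, every weak solution \((m,u)\) satisfies \eqref{aprimu}; discarding the non-negative terms not involving \(\epsilon\), we immediately obtain
\begin{equation*}
\epsilon \|m\|_{L^2(\Omega)}^2 + \epsilon \|u\|_{L^2(\Omega)}^2 + \epsilon \sum_{|\alpha|=2k} \|\partial^\alpha m\|_{L^2(\Omega)}^2 + \epsilon \sum_{|\alpha|=2k} \|\partial^\alpha u\|_{L^2(\Omega)}^2 \leq C.
\end{equation*}
This gives control of \(\sqrt{\epsilon}\,m\) and \(\sqrt{\epsilon}\,u\) in \(L^2(\Omega)\) and of their top-order derivatives in \(L^2(\Omega)\), but leaves the intermediate derivatives uncontrolled.

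To close the gap, I would invoke the Gagliardo--Nirenberg interpolation inequality for a bounded domain with smooth boundary: for every \(f \in H^{2k}(\Omega)\) and every multi-index \(\alpha\) with \(0 \leq |\alpha| \leq 2k\),
\begin{equation*}
\|\partial^\alpha f\|_{L^2(\Omega)}^2 \leq C \Big( \sum_{|\beta|=2k} \|\partial^\beta f\|_{L^2(\Omega)}^2 + \|f\|_{L^2(\Omega)}^2 \Big),
\end{equation*}
with \(C\) depending only on \(\Omega\), \(d\), and \(k\). This follows from the standard Gagliardo--Nirenberg inequality applied to an extension of \(f\) to the whole space, so it requires no boundary condition on \(f\) (which is important since \(m\) and \(u\) have different boundary data).

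Applying the above inequality separately to \(m\) and to \(u\), multiplying through by \(\epsilon\), and using the bound from Corollary~\ref{apriWS1}, we get \(\epsilon \|\partial^\alpha m\|_{L^2(\Omega)}^2 + \epsilon \|\partial^\alpha u\|_{L^2(\Omega)}^2 \leq C\) for every \(0 \leq |\alpha| \leq 2k\). Summing over \(\alpha\) yields
\begin{equation*}
\|\sqrt{\epsilon}\,m\|_{H^{2k}(\Omega)}^2 + \|\sqrt{\epsilon}\,u\|_{H^{2k}(\Omega)}^2 \leq C,
\end{equation*}
which is the desired estimate. There is no real obstacle here: everything reduces to the already-established a priori estimate plus the Gagliardo--Nirenberg interpolation, which the authors explicitly list among their standing tools. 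The only thing to verify is that the interpolation constant can be taken independent of the particular \(m\) and \(u\); this is automatic because the inequality is a purely functional-analytic statement about \(H^{2k}(\Omega)\).
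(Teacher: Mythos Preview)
Your approach is essentially identical to the paper's: extract the \(\epsilon\)-weighted \(L^2\) and top-order bounds from \eqref{aprimu} via Corollary~\ref{apriWS1}, then interpolate with Gagliardo--Nirenberg. One small slip: you write that you ``discard the non-negative terms not involving \(\epsilon\)'', but the term \(\int_\Omega m g(m)\,\dx\) in \eqref{aprimu} need not be non-negative (think \(g=\ln\)). The paper handles this by explicitly invoking Assumption~\ref{AWC1}, which gives \(\int_\Omega m g(m)\,\dx \geq -C\), together with the positivity of \(m\) and \(\phi\) for the remaining two terms; with that correction your argument matches the paper's exactly.
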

\begin{proof}
Using  Corollary~\ref{apriWS1},
Assumption~\ref{AWC1}, and the positivity of  \(m\) and \(\phi\), we obtain
\begin{equation*}
\begin{aligned}
{\epsilon} \int_{\Omega} \
\Big[
m^2+u^2+\sum_{|\alpha|=2k}\big((\partial^\alpha m)^2+(\partial^\alpha u)^2\big)
 \Big] \, \dx
\leq C,
\end{aligned}
\end{equation*}
where \(C\) is a positive constant  that depends only on the
problem data. The conclusion follows from the  Gagliardo--Nirenberg
interpolation
inequality.
\end{proof}

\section{An auxiliary variational problem}\label{VP}
Here, we introduce a variational problem whose Euler--Lagrange equation is the first equation in \eqref{HighdRP}. We prove the existence and uniqueness of a minimizer, $m$, to this variational problem. Then, we investigate properties of $m$ that enable us to prove the existence and uniqueness of a weak solution to Problem~\ref{MP} in Section~\ref{PfMT1}.

Given $(m, u) \in H^{2k-2}(\Omega) \times H^{2k-1}(\Omega)$ with $m \geq 0$, let $I_{(m, u)}: H^{2k}(\Omega) \to \Rr$ be the functional defined, for $w\in H^{2k}(\Omega)$, by
\begin{equation}\label{defImu}
I_{(m, u)}[w]: =  \int_{\Omega} \Big[ \frac{\epsilon }{2} \Big( w^2 + \sum_{|\alpha|=2k}(\partial^\alpha w)^2\Big) + \big( - u  - H(x,Du)+ g(m)-V \big) w \Big] \, \dx.
\end{equation}
Note that by Morrey's embedding theorem,
\(H^{2k-2}(\Omega)\) is compactly embedded
in \(C^{0,l}(\overline \Omega)\) for
some \(l\in (0,1)\). In particular,
there exists a positive constant, \(C=C(\Omega,k,d,l)\),
such that
for all \(\vartheta\in H^{2k-2}(\Omega)\),
we have
\begin{equation}
\label{eq:MorreyET}
\begin{aligned}
\Vert \vartheta \Vert_{C^{0,l}(\overline \Omega)} \leq C \Vert \vartheta \Vert_{H^{2k-2}(
\Omega)}.
\end{aligned}
\end{equation}
Next, we fix $(m_0,u_0)\in H^{2k-2}(\Omega)\times H^{2k-1}(\Omega)$ with $m_0\geq0$, and set $I_0=I_{(m_0,u_0)}$. We address the variational problem finding $m\in \Aa$ such that 
%
\begin{equation}\label{VP1}
I_0[m] =\inf_{w \in \Aa}I_0[w],
\end{equation}
where $\Aa$ is defined in \eqref{DAa}.
%
%
\begin{pro}\label{EMVP1} 
Let $H, g,$ and $V$ be as in Problem~\ref{MP}, and 
fix $(m_0, u_0) \in H^{2k-2}(\Omega) \times H^{2k-1}(\Omega)$ with $m_0 \geq 0$. Then, there exists a unique $m \in \Aa$ satisfying \eqref{VP1}.
\end{pro}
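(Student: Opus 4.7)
The plan is to apply the direct method of the calculus of variations to the functional $I_0$ restricted to $\Aa$, exploiting the strict convexity coming from the quadratic regularizing part $\frac{\epsilon}{2}(w^2 + \sum_{|\alpha|=2k}(\partial^\alpha w)^2)$.

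First, I would check that $I_0$ is well defined and real valued on all of $H^{2k}(\Omega)$. Because $2k > \tfrac{d}{2}+2$, we have $H^{2k-2}(\Omega) \hookrightarrow C^{0,l}(\overline\Omega)$ by \eqref{eq:MorreyET}, so $m_0$ is bounded and continuous and hence $g(m_0) \in C(\overline\Omega)$ since $g\in C^\infty(\Rr_0^+)$; similarly $H^{2k-1}(\Omega) \hookrightarrow C^1(\overline\Omega)$, so $u_0$ and $Du_0$ are continuous on $\overline\Omega$ and $H(x,Du_0)$ is bounded. Combined with $V\in C^\infty(\overline\Omega)$, the coefficient $-u_0 - H(x,Du_0)+g(m_0)-V$ of $w$ in \eqref{defImu} lies in $L^\infty(\Omega)$, so the linear term defines a continuous linear form on $L^2(\Omega)$ (and hence on $H^{2k}(\Omega)$).

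Next, I would observe that the quadratic form $Q[w] := \int_\Omega (w^2 + \sum_{|\alpha|=2k}(\partial^\alpha w)^2)\, \dx$ is, by the Gagliardo--Nirenberg interpolation inequality, equivalent to the squared $H^{2k}(\Omega)$-norm. Writing $I_0[w] = \tfrac{\epsilon}{2} Q[w] + L[w]$ with $L$ a bounded linear functional on $L^2(\Omega)$, we get the coercivity estimate
\begin{equation*}
I_0[w] \geq \tfrac{\epsilon}{2C} \Vert w\Vert_{H^{2k}(\Omega)}^2 - C'\Vert w\Vert_{L^2(\Omega)} \to +\infty \quad \text{as } \Vert w\Vert_{H^{2k}(\Omega)}\to +\infty,
\end{equation*}
so in particular $\inf_{\Aa} I_0 > -\infty$ (taking $w\equiv 0\in \Aa$ also shows the infimum is finite). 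Then, pick a minimizing sequence $\{w_n\}\subset \Aa$; coercivity gives that $\{w_n\}$ is bounded in $H^{2k}(\Omega)$, and reflexivity yields a weakly convergent subsequence $w_n \rightharpoonup m$ in $H^{2k}(\Omega)$.

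It remains to show $m\in \Aa$ and that $m$ attains the infimum. For $\Aa$ weakly closed: it is convex, and it is strongly closed because $H^{2k}(\Omega)\hookrightarrow C(\overline\Omega)$ (another consequence of $2k>d/2+2$), so pointwise non-negativity is preserved under uniform convergence; hence $\Aa$ is weakly closed. For weak lower semicontinuity: $Q$ is a sum of squares of Hilbert seminorms, hence convex and continuous, hence weakly lower semicontinuous; and $L$ is weakly continuous. Therefore
\begin{equation*}
I_0[m] \leq \liminf_{n\to\infty} I_0[w_n] = \inf_{w\in \Aa} I_0[w],
\end{equation*}
and $m$ is a minimizer. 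Uniqueness follows from strict convexity: $Q$ is strictly convex (if $Q[\tfrac{w_1+w_2}{2}] = \tfrac{1}{2}Q[w_1]+\tfrac{1}{2}Q[w_2]$, the parallelogram law forces $w_1 = w_2$ in the norm equivalent to $\Vert\cdot\Vert_{H^{2k}}$), and the linear term preserves strict convexity; so two minimizers would have to coincide. The main subtlety I expect is simply the careful verification that $Q$ is equivalent to the full $H^{2k}$-norm via Gagliardo--Nirenberg, which underlies both coercivity and the uniqueness argument; all the other ingredients are direct applications of the standard Sobolev embeddings permitted by the choice $2k>\tfrac{d}{2}+2$.
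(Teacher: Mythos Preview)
Your proof is correct and follows essentially the same approach as the paper's: both apply the direct method using the Morrey embedding to see that the linear coefficient lies in $L^\infty$, the Gagliardo--Nirenberg inequality to show that $Q$ controls the full $H^{2k}$-norm (yielding coercivity and boundedness of a minimizing sequence), weak lower semicontinuity of the convex quadratic part plus weak continuity of the linear part, and strict convexity via the parallelogram identity for uniqueness. The only cosmetic differences are that the paper carries out the Young-inequality bound and the midpoint computation for uniqueness explicitly, while you phrase these as coercivity and strict convexity; and the paper asserts $m\geq 0$ directly from $m_n\geq 0$, whereas you justify weak closedness of $\Aa$ through convexity and strong closedness.
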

%
%
\begin{proof}  
Let $\{m_{n}\}_{n=1}^{\infty} \subset \Aa$ be a minimizing sequence for \eqref{VP1}, and fix $\delta\in(0,1)$. 
Then, there exists $N\in \Nn$ 
such that for all $n \geq N$, 
\begin{equation}\label{Bvp1}
I_0[m_n]<\inf_{w\in\Aa} I_0[w] +\delta \leq I_0[0] + 1=1. 
\end{equation}
By \eqref{eq:MorreyET}, we have 
$m_0\in C^{0,l}(\overline\Omega)$ and $u_0 \in C^{1,l }(\overline\Omega)$ for some $l\in(0,1)$, and  
$C_0:=\|- u_0 - H(x,Du_0) + g(m_0) -V\|_{L^\infty(\Omega)} \in \Rr$.
Then, using Young's inequality and \eqref{Bvp1}, for all $n\geq N$, we have
\begin{equation}\label{eqvp-3}
\frac{\epsilon }{2}\int_{\Omega} \Big[ m_{n}^2 + \sum_{|\alpha|=2k}(\partial^\alpha m_n)^2\Big] \, \dx \leq \int_{\Omega}C_0 {m_n}  \, \dx + 1
\leq 
\frac{\epsilon }{4} \int_{\Omega} m_n^2  \, \dx +\frac{C_0^2}{\epsilon}+1.
\end{equation}
By the Gagliardo--Nirenberg interpolation inequality, we have 
\begin{equation}\label{eqvp-4}
\|\partial^\alpha m_n\|_{L^2(\Omega)}^2 
\leq 
C \big(\|m_n\|_{L^2(\Omega)}^2 +\|D^{2k}m_n\|_{L^2(\Omega)}^2\big),
\end{equation}
where $\alpha$ is any multi-index such that $|\alpha|\leq 2k$.
Hence, from \eqref{eqvp-3} and \eqref{eqvp-4}, we conclude that $\{m_{n}\}_{n=1}^{\infty}$ is bounded in $H^{2k}(\Omega)$.
Consequently, $m_{n} \rightharpoonup m$ weakly in $H^{2k}(\Omega)$ for some $m\in H^{2k}(\Omega)$, extracting a subsequence if necessary. 
Because $m_{n} \geq 0$, also $m\geq0$; so, $m \in \Aa$. 

Moreover, $m_n\to m$ in $L^2(\Omega)$ and $\Vert D^{2k} m\Vert_{L^2(\Omega)}^2 \leq \liminf_{n \to \infty} \Vert D^{2k} m_n \Vert_{L^2(\Omega)}^2$; hence, $I_0[m]\leq \liminf_n I_0[m_n]=\inf I_0[w]\leq I_0[m]$, which shows that $m$ is a minimizer of $I_0$ over $\Aa$.

We now prove uniqueness. Assume that $m, \widetilde{m} \in \Aa$ are minimizers of $I_0$ over $\Aa$
 with $m \neq \widetilde{m}$. Then, $\frac{m + \widetilde{m}}{2} \in \Aa$ and,
recalling that $m-\widetilde{m}\in C^0(\overline\Omega)$, $\int_{\Omega}(m-\widetilde{m})^2\, \dx>0$. Moreover,
\begin{equation}\label{eqvp-5}
 \begin{aligned}
 I_0\left[\frac{m + \widetilde{m}}{2}\right]
 &= \int_{\Omega} \bigg[ \frac{\epsilon }{2}
  \Big( \Big(\frac{m+\widetilde{m}}{2}\Big)^2 
  + \sum_{|\alpha|=2k}\Big(\frac{\partial^\alpha m+\partial^\alpha \widetilde{m}}{2}\Big)^2\Big)\\
   &\qquad + \big( - u_0 - H(x,Du_0) + g(m_0) - V \big)
    \Big(\frac{m+\widetilde{m}}{2}\Big) \bigg]\, \dx \\
& = \frac{1}{2}\int_{\Omega} \bigg[
     \frac{\epsilon }{2} \Big( m^2 + \sum_{|\alpha|=2k}(\partial^\alpha m)^2 \Big) 
    + \big( - u_0 - H(x,Du_0) + g(m_0) - V \big)m \bigg] \, \dx \\
   &\qquad+\frac{1}{2}\int_{\Omega} \bigg[
    \frac{\epsilon }{2} \Big( \widetilde{m}^2 + \sum_{|\alpha|=2k}(\partial^\alpha \widetilde{m})^2 \Big)
    + \big( - u_0 - H(x,Du_0) + g(m_0) - V\big)\widetilde{m} \bigg] \, \dx \\
  &\qquad- \frac{\epsilon}{8}\int_{\Omega} \Big[ (m - \widetilde{m})^2  + \sum_{|\alpha|=2k}(\partial^\alpha m - \partial^\alpha \widetilde{m})^2  \Big] \, \dx \\
 &<\frac{1}{2}I_0[m] + \frac{1}{2}I_0[\widetilde{m}] = \min_{w \in \Aa}I_0[w],
 \end{aligned}
\end{equation}
which  contradicts the fact that $\frac{m+\widetilde{m}}{2} \in \Aa$. Thus, $m=\widetilde{m}$.
\end{proof}

\begin{cor}\label{aprimvp1}
Let $H,g,$ and $V$ be as in Problem~\ref{MP}, fix  $(m_0, u_0) \in H^{2k-2}(\Omega) \times H^{2k-1}(\Omega)$ with $m_0 \geq 0$, and let $m \in \Aa$ be the unique solution to \eqref{VP1}. Set $C_0:=\|u_0-H(x,Du_0)+g(m_0)-V\|_{L^\infty(\Omega)}$. Then, there exists a positive constant, $C$, depending only on the problem data
and on \(C_0\), such that
 $\Vert m\Vert_{H^{2k}(\Omega)} \leq C $.
\end{cor}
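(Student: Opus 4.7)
The plan is to reuse the core computation from the proof of Proposition~\ref{EMVP1}, but now applied to the minimizer $m$ itself rather than to a minimizing sequence, with $0\in\Aa$ as the natural competitor.

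First, I would observe that since $0\in\Aa$, minimality yields $I_0[m]\leq I_0[0]=0$, i.e.
\begin{equation*}
\frac{\epsilon}{2}\int_{\Omega}\Big[m^2+\sum_{|\alpha|=2k}(\partial^\alpha m)^2\Big]\,\dx
\leq \int_{\Omega}\big(u_0+H(x,Du_0)-g(m_0)+V\big)m\,\dx.
\end{equation*}
By the definition of $C_0$ and the positivity of $m$, the right-hand side is bounded by $C_0\int_\Omega m\,\dx$. A weighted Young's inequality then gives
\begin{equation*}
C_0\int_\Omega m\,\dx \leq \frac{\epsilon}{4}\int_\Omega m^2\,\dx + \frac{C_0^2|\Omega|}{\epsilon},
\end{equation*}
and absorbing the first term on the left yields
\begin{equation*}
\frac{\epsilon}{4}\int_\Omega m^2\,\dx+\frac{\epsilon}{2}\sum_{|\alpha|=2k}\int_\Omega(\partial^\alpha m)^2\,\dx\leq \frac{C_0^2|\Omega|}{\epsilon}.
\end{equation*}
This is the exact analogue of \eqref{eqvp-3}, now for $m$ itself; it controls $\|m\|_{L^2(\Omega)}$ and $\|D^{2k}m\|_{L^2(\Omega)}$ in terms of $\epsilon$, $C_0$, and $|\Omega|$.

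Next, I would invoke the Gagliardo--Nirenberg interpolation inequality exactly as in \eqref{eqvp-4}: for every multi-index $\alpha$ with $|\alpha|\leq 2k$,
\begin{equation*}
\|\partial^\alpha m\|_{L^2(\Omega)}^2\leq C\big(\|m\|_{L^2(\Omega)}^2+\|D^{2k}m\|_{L^2(\Omega)}^2\big),
\end{equation*}
where $C$ depends only on $\Omega$, $k$, and $d$. Summing over all multi-indices $|\alpha|\leq 2k$ and combining with the previous estimate gives $\|m\|_{H^{2k}(\Omega)}\leq C$, where $C$ depends only on the problem data (including $\epsilon$, which is part of the data of Problem~\ref{MP}) and on $C_0$. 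This yields the claim.

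There is no serious obstacle here: the argument is essentially a one-shot energy estimate built into the minimality inequality, exactly mirroring \eqref{eqvp-3}--\eqref{eqvp-4} in the existence proof. The only point requiring a little care is to note that the constant from Gagliardo--Nirenberg is universal in the sense of Section~\ref{Prows}, so that all dependences are tracked correctly; the final constant is $C=C(\epsilon,C_0,\Omega,d,k)$, in particular independent of the particular pair $(m_0,u_0)$ beyond the quantity $C_0$ that controls the linear coefficient in $I_0$.
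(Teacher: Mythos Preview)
Your proof is correct and follows essentially the same approach as the paper's own proof, which is the one-liner ``Because $I_0[m]\leq I_0[0]$, \eqref{eqvp-3} and \eqref{eqvp-4} hold with $m_n$ replaced by $m$, from which Corollary~\ref{aprimvp1} follows.'' You have simply spelled out explicitly what the paper leaves implicit, and your remark about the $\epsilon$-dependence of the constant is accurate and appropriate for how the corollary is later used.
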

\begin{proof}
Because $I_0[m]\leq I_0[0],$ \eqref{eqvp-3} and \eqref{eqvp-4} hold with $m_n$ replaced by $m$,
from which Corollary~\ref{aprimvp1} follows.
\end{proof}

\begin{pro}\label{PVI}
Let $H, g,$ and $V$ be as in Problem \ref{MP}, fix
$(m_0, u_0) \in H^{2k-2}(\Omega) \times H^{2k-1}(\Omega)$ with $m_0 \geq 0$, and let $m \in \Aa$ be the unique solution to \eqref{VP1}. Then, for all $w \in \Aa$, $m$ satisfies
\begin{equation}\label{VI1}
\begin{aligned}
&\int_{\Omega} \big( -u_0 - H(x,Du_0) + g(m_0) -V\big)( w - m )  \, \dx \\
&\qquad+\int_{\Omega}\Big[\epsilon m ( w - m ) 
+ \epsilon \sum_{|\alpha|=2k}\partial^\alpha m(\partial^\alpha w-\partial^\alpha m)\Big] \, \dx 
\geq 0.
\end{aligned}
\end{equation}
\end{pro}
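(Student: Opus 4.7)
The plan is to derive the variational inequality \eqref{VI1} from the fact that $m$ minimizes $I_0$ over the convex cone $\Aa$, by an inner variation along convex combinations with admissible test elements. The proof is therefore a direct application of the standard first-order necessary condition for constrained minimization over a convex set.

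First, I would observe that $\Aa$ is a convex subset of $H^{2k}(\Omega)$: indeed, for any $m,w\in\Aa$ and any $t\in[0,1]$, the function $(1-t)m+tw=m+t(w-m)$ is non-negative and lies in $H^{2k}(\Omega)$, so it belongs to $\Aa$. Since $m\in\Aa$ is the unique minimizer of $I_0$ over $\Aa$ provided by Proposition~\ref{EMVP1}, we have, for every $w\in\Aa$ and every $t\in[0,1]$,
\begin{equation*}
I_0[m+t(w-m)]\geq I_0[m].
\end{equation*}

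Next, I would compute explicitly the function $\varphi(t):=I_0[m+t(w-m)]$. From the definition of $I_0$ in \eqref{defImu}, expanding the squares yields
\begin{equation*}
\begin{aligned}
\varphi(t)=I_0[m]
&+t\int_{\Omega}\Big[\epsilon\,m(w-m)+\epsilon\sum_{|\alpha|=2k}\partial^\alpha m\,(\partial^\alpha w-\partial^\alpha m)\\
&\qquad\qquad+\bigl(-u_0-H(x,Du_0)+g(m_0)-V\bigr)(w-m)\Big]\,\dx\\
&+\frac{t^2\epsilon}{2}\int_\Omega\Big[(w-m)^2+\sum_{|\alpha|=2k}(\partial^\alpha w-\partial^\alpha m)^2\Big]\,\dx,
\end{aligned}
\end{equation*}
so $\varphi$ is a quadratic polynomial in $t$, hence $C^\infty$ in $t$, and the coefficient of $t$ is precisely the left-hand side of \eqref{VI1}.

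Finally, because $\varphi(t)\geq\varphi(0)$ for all $t\in[0,1]$, the right derivative $\varphi'(0^+)$ is non-negative, which is exactly \eqref{VI1}. This establishes the proposition. The only point that required care was the admissibility of the test function $m+t(w-m)$ in $\Aa$, which was secured by the convexity of $\Aa$ and the non-negativity of both $m$ and $w$; no regularity or smoothness obstacle arises, since all terms in $I_0$ depend smoothly (in fact polynomially) on $t$ along the admissible path.
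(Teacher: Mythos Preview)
Your proof is correct and follows essentially the same approach as the paper's: both use the convexity of $\Aa$ to form the admissible path $t\mapsto m+t(w-m)$, expand $I_0$ along it, and read off \eqref{VI1} as the nonnegativity of the one-sided derivative at $t=0$. The only cosmetic difference is that the paper writes out the difference quotient $\tfrac{1}{\tau}(i(\tau)-i(0))$ and lets $\tau\to0^+$, whereas you identify $\varphi$ directly as a quadratic polynomial and take $\varphi'(0^+)\geq0$.
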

%
%
\begin{proof} 
Let $w \in \Aa$. If $\tau \in [0,1]$, then $m + \tau(w - m)=( 1 - \tau)m + \tau w \in \Aa$;
hence, the mapping $i: [0,1] \to \Rr$ given by
\begin{equation*}
i[\tau] := I_0 \big[m + \tau(w - m)\big]
\end{equation*}
is a well-defined $C^1$-function. 

Because $i(0)\leq i(\tau)$ for all $0\leq \tau \leq 1$, we have $i'(0) \geq 0$. 
On the other hand, for $0<\tau\leq1$,
we have\begin{equation*}
 \begin{aligned}
\frac{1}{\tau}\big(i(\tau)-i(0)\big)
 =&\int_{\Omega} \big ( - u_0 - H(x,Du_0) + g(m_0) -V\big)( w - m ) \, \dx \\
&+ {\epsilon} \int_{\Omega} \Big[
m( w - m) +\sum_{|\alpha|=2k}\partial^\alpha
m(\partial^\alpha w-\partial^\alpha
m)\Big] \, \dx\\
&+\tau\frac{\epsilon}{2} \int_{\Omega} \Big[( w - m)^2+  \sum_{|\alpha|=2k}(\partial^\alpha w -\partial^\alpha m )^2\Big] \, \dx.
 \end{aligned}
\end{equation*}
Consequently, letting $\tau\to 0^+$ in this equality and using $i'(0)\geq 0$, we obtain \eqref{VI1}.
\end{proof}

%
%

\begin{pro}\label{ELEI}
Let $H, g,$ and $V$ be as in Problem~\ref{MP}, fix $(m_0, u_0) \in H^{2k-2}(\Omega) \times H^{2k-1}(\Omega)$ with $m_0 \geq 0$, and let $m$ be the unique solution of \eqref{VP1}. Set $\Omega_1 = \{x \in \Omega\ |\  m(x)>0 \}$;
then, $m$ satisfies
\begin{equation*}\label{VWE1}
- u_0 - H(x,Du_0) + g(m_0)  - V +\epsilon ( m + \Delta^{2k}m )= 0 \quad \mbox{pointwise\ in} \ \Omega_1 \\
\end{equation*}
and
\begin{equation*}
- u_0 - H(x,Du_0) + g(m_0)  -V+\epsilon ( m + \Delta^{2k}m )\geq 0 \quad \mbox{in the sense of distributions in} \ \Omega. 
\end{equation*}
\end{pro}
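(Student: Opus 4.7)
The plan is to extract both assertions from the variational inequality \eqref{VI1} of Proposition~\ref{PVI} by inserting admissible perturbations of $m$ as test functions, following exactly the scheme sketched in Remark~\ref{rmkmEL}.

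Since $2k>d/2+2$, Morrey's embedding applied to $m\in H^{2k}(\Omega)$ gives $m\in C(\overline\Omega)$, so the set $\Omega_1=\{m>0\}$ is open. To obtain the pointwise identity, I fix $w_1\in C_c^\infty(\Omega_1)$ and observe that, since $\supp w_1$ is compactly contained in $\Omega_1$ and $m$ is continuous, $\min_{\supp w_1}m>0$; consequently, $w:=m+\tau w_1\in\Aa$ for every real $\tau$ with $|\tau|$ sufficiently small. Substituting this $w$ into \eqref{VI1} yields
\begin{equation*}
\tau\int_\Omega\big(-u_0-H(x,Du_0)+g(m_0)-V\big)w_1\,\dx +\tau\epsilon\int_\Omega\Big[mw_1+\sum_{|\alpha|=2k}\partial^\alpha m\,\partial^\alpha w_1\Big]\dx \geq 0.
\end{equation*}
Because the sign of $\tau$ is arbitrary, equality must hold; integrating the $2k$-th order sum by parts (boundary terms vanish because $\supp w_1\Subset\Omega$) and invoking the arbitrariness of $w_1\in C_c^\infty(\Omega_1)$, I conclude that
\begin{equation*}
-u_0-H(x,Du_0)+g(m_0)-V+\epsilon\big(m+\Delta^{2k}m\big)=0
\end{equation*}
in the sense of distributions in $\Omega_1$. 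By \eqref{eq:MorreyET}, $u_0\in C^{1,l}(\overline\Omega)$ and $m_0\in C^{0,l}(\overline\Omega)$, so every term on the left-hand side except $\epsilon\Delta^{2k}m$ is continuous on $\overline\Omega$; hence $\Delta^{2k}m$ agrees almost everywhere on $\Omega_1$ with a continuous function, and the identity holds pointwise a.e.\ there.

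For the distributional inequality on $\Omega$, the nonnegativity constraint of $\Aa$ replaces the smallness of $\tau$. Given $w_2\in C_c^\infty(\Omega)$ with $w_2\geq 0$, the function $m+\tau w_2$ belongs to $\Aa$ for every $\tau\geq 0$; inserting $w:=m+\tau w_2$ into \eqref{VI1}, dividing by $\tau>0$, and integrating by parts $2k$ times (again with vanishing boundary contributions) produces
\begin{equation*}
\int_\Omega\Big(-u_0-H(x,Du_0)+g(m_0)-V+\epsilon(m+\Delta^{2k}m)\Big)w_2\,\dx\geq 0,
\end{equation*}
which is precisely the stated distributional inequality. The only delicate point throughout is verifying admissibility of the perturbations $m+\tau w_i$ in $\Aa$, which is handled by the continuity of $m$ together with the location of the supports of the $w_i$; no analytic machinery beyond the variational inequality furnished by Proposition~\ref{PVI} is required.
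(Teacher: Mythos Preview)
Your proof is correct and follows exactly the approach the paper takes: the paper's own proof simply instructs the reader to repeat the argument of Remark~\ref{rmkmEL}, replacing (E2) by the variational inequality \eqref{VI1} and invoking the embedding $H^{2k-2}(\Omega)\hookrightarrow C^{0,l}(\overline\Omega)$, which is precisely what you carry out in detail. Your additional care in justifying the openness of $\Omega_1$ and the passage from distributional to pointwise equality is a welcome elaboration, not a departure.
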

\begin{proof}
To prove Proposition~\ref{ELEI}, it suffices to argue as in Remark~\ref{rmkmEL}, invoking \eqref{VI1} in place of (E2) and recalling the embedding $H^{2k-2}(\Omega)\hookrightarrow C^{0,l}(\overline\Omega)$ for some $l\in(0,1)$.
\end{proof}


\section{
A problem given by a bilinear form}\label{BF}
Here, we consider an auxiliary problem determined by a bilinear form related to the second equation in \eqref{HighdRP}.
Using  the Lax--Milgram Theorem, we prove the existence and uniqueness of a solution, $u$, to this auxiliary problem, and we establish a uniform bound on \(u\). These results are used in Section~\ref{PfMT1} to study the existence and uniqueness of a weak solution to Problem~\ref{MP}.

With $H,\phi,$ and $\xi$  as in Problem~\ref{MP}, 
given $(m, u) \in H^{2k-2}(\Omega) \times H^{2k-1}(\Omega)$ with $m \geq 0$, we define a bilinear form, $B:H_0^{2k}(\Omega) \times H_0^{2k}(\Omega) \to \Rr$, and a linear functional, $f_{(m, u)}:L^2(\Omega) \to \Rr$, by setting, for $v_1,v_2\in H_0^{2k}(\Omega)$ and $v\in L^2(\Omega)$,
\begin{equation}
\label{defBfmu}
\begin{aligned}
&B[v_1,v_2]:= \int_{\Omega} \epsilon \Big(v_1v_2 + \sum_{|\alpha|=2k}\partial^\alpha v_1 \partial^\alpha v_2\Big ) \, \dx,\\
&\big\langle f_{(m, u)},v\big\rangle
:=\int_{\Omega} \big[-m +\div\big(m D_pH(x,Du)\big)+\phi - 
\epsilon (\xi+\Delta^{2k}\xi)\big]v \, \dx.
\end{aligned}
\end{equation}
Fix $(m_0,u_0)\in H^{2k-2}(\Omega) \times H^{2k-1}(\Omega)$ with \(m_0\geq0\), and set  $f_0:= f_{(m_0,u_0)}$.  We address next the problem of finding $u\in H_0^{2k}(\Omega)$ satisfying\begin{equation}\label{BL1}
B[u,v]=\langle f_0,v \rangle \quad \mbox{for all}\ v \in H_0^{2k}(\Omega).
\end{equation}

\begin{pro}\label{ESBP1}
Let $H,\phi,$ and $\xi$ be as in Problem~\ref{MP}, and fix $(m_0, u_0) \in H^{2k-2}(\Omega) \times H^{2k-1}(\Omega)$ with $m_0 \geq 0$. Then, there exists a unique  solution, $u \in H_0^{2k}(\Omega)$, to \eqref{BL1}.
\end{pro}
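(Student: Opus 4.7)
The plan is to apply the Lax--Milgram theorem to the bilinear form $B$ and the linear functional $f_0$ on the Hilbert space $H_0^{2k}(\Omega)$. There are four points to verify: boundedness of $B$, coercivity of $B$, linearity of $f_0$, and boundedness of $f_0$; only the last one requires genuine work.

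First, I would check boundedness and coercivity of $B$. Boundedness is immediate from the Cauchy--Schwarz inequality and the fact that each term in the sum defining $B$ is controlled by $\|v_1\|_{H^{2k}(\Omega)} \|v_2\|_{H^{2k}(\Omega)}$. For coercivity, note that
\begin{equation*}
B[v,v]=\epsilon\Big(\|v\|_{L^2(\Omega)}^2 + \sum_{|\alpha|=2k}\|\partial^\alpha v\|_{L^2(\Omega)}^2\Big),
\end{equation*}
so I would use the Gagliardo--Nirenberg interpolation inequality (exactly as in \eqref{eqvp-4}) to upgrade this into a bound of the form $B[v,v] \geq c\,\epsilon\, \|v\|_{H^{2k}(\Omega)}^2$ with $c=c(\Omega,d,k)>0$.

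The main step, and the one I would call the main obstacle, is showing that $f_0$ extends to a bounded linear functional on $H_0^{2k}(\Omega)$, because the definition of $f_{(m_0,u_0)}$ involves $\div\bigl(m_0 D_pH(x,Du_0)\bigr)$, which is not obviously in $L^2(\Omega)$ and a priori only pairs with $L^2$-functions. I would exploit the fact that $v \in H_0^{2k}(\Omega) \subset H_0^1(\Omega)$ to integrate by parts and rewrite
\begin{equation*}
\langle f_0, v\rangle = \int_\Omega\bigl[-m_0+\phi - \epsilon(\xi+\Delta^{2k}\xi)\bigr]v\,\dx - \int_\Omega m_0\, D_pH(x,Du_0)\cdot Dv\,\dx.
\end{equation*}
Since $2k-2>d/2$, Morrey's embedding \eqref{eq:MorreyET} gives $m_0\in C^{0,l}(\overline\Omega)$ and $u_0\in C^{1,l}(\overline\Omega)$ for some $l\in(0,1)$, so $m_0$ and $Du_0$ are bounded; together with $H\in C^\infty(\overline\Omega\times\Rr^d)$, this yields $m_0\, D_pH(\cdot,Du_0)\in L^\infty(\Omega;\Rr^d)$. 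Combined with the smoothness of $\phi, \xi$, I can estimate
\begin{equation*}
|\langle f_0, v\rangle| \leq C_0'\bigl(\|v\|_{L^2(\Omega)} + \|Dv\|_{L^2(\Omega)}\bigr)\leq C_0'\,\|v\|_{H^{2k}(\Omega)},
\end{equation*}
where $C_0'$ depends only on the problem data, $\epsilon$, $\|m_0\|_{H^{2k-2}(\Omega)}$, and $\|u_0\|_{H^{2k-1}(\Omega)}$. Linearity is manifest from the definition.

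With all four hypotheses of Lax--Milgram verified, I conclude that there exists a unique $u\in H_0^{2k}(\Omega)$ such that $B[u,v]=\langle f_0,v\rangle$ for every $v\in H_0^{2k}(\Omega)$. A small remark I would add is that the coercivity constant degenerates as $\epsilon\to 0$, which is consistent with the regularizing role of the $\epsilon$-terms and explains why the subsequent sections still need the uniform-in-$\epsilon$ estimates of Section~\ref{Prows} to pass to the limit.
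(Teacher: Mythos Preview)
Your proposal is correct and follows the same Lax--Milgram strategy as the paper. The one substantive difference is in how you bound $f_0$: you integrate by parts to move the divergence onto the test function, whereas the paper observes that $\div\bigl(m_0 D_pH(x,Du_0)\bigr)$ already lies in $L^2(\Omega)$, since $m_0\in H^{2k-2}(\Omega)\cap C^{0,l}(\overline\Omega)$ and $u_0\in H^{2k-1}(\Omega)\cap C^{1,l}(\overline\Omega)$ give $Dm_0,\,D^2u_0\in L^2(\Omega)$ with bounded coefficients, so $f_0$ is in fact bounded on all of $L^2(\Omega)$. Thus what you flag as the ``main obstacle'' dissolves directly under the available regularity; your integration-by-parts detour is valid but unnecessary here (it would matter if $m_0$ or $u_0$ carried one fewer derivative). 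For coercivity the paper writes $B[v,v]\geq \epsilon\|v\|_{H_0^{2k}(\Omega)}^2$ via Poincar\'e rather than your Gagliardo--Nirenberg route, a purely cosmetic difference.
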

\begin{proof}
Because  $(m_0,u_0)\in \big(H^{2k-2}(\Omega) \times H^{2k-1}(\Omega)\big) \cap \big( C^{0,l}(\overline\Omega) \times 
C^{1,l}(\overline\Omega)\big)$ for some $l\in(0,1)$ (see \eqref{eq:MorreyET}),
we have   $(-m_0 + \div\big(m_0D_p H(x,Du_0)\big)+\phi -\epsilon (\xi+\Delta^{2k}\xi) ) \in L^2(\Omega)$. Hence, by H\"older's inequality, $f_0$ is bounded in \(L^2(\Omega)\). 

Using H\"older's and Poincar\'e's inequalities, we have $|B[v_1,v_2]| \allowbreak \leq  c \Vert v_1\Vert_{H_0^{2k}(\Omega)}\Vert v_2\Vert_{H_0^{2k}(\Omega)}$ for all   $v_1,v_2\in H_0^{2k}(\Omega)$, where $c>0$ is a constant  independent of $v_1$ and $v_2$. 
Moreover,  we clearly have $B[v_1,v_1]\geq\epsi \Vert v_1\Vert_{H^{2k}_0(\Omega)}^2$ for all   $v_1\in H_0^{2k}(\Omega)$. 

Therefore, by the Lax--Milgram Theorem, there exists a unique  $u \in H_0^{2k}(\Omega)$ satisfying \eqref{BL1}. 
\end{proof}


\begin{lem}\label{ApriBL}
Let $H,\phi,$ and $\xi$ be as in Problem~\ref{MP}, fix $(m_0, u_0) \in H^{2k-2}(\Omega) \times H^{2k-1}(\Omega)$ with $m_0 \geq 0$, and let $u$ be the unique  solution to \eqref{BL1} in $H_0^{2k}(\Omega)$.  Then, there exists a positive constant, $C$,
depending only on the problem data and on \( \Vert m_0\Vert_{H^{2k-2}(\Omega)}\) and  
\(\Vert u_0\Vert_{H^{2k-1}(\Omega)}\),
such that
$\Vert u\Vert_{H^{2k}(\Omega)} \leq C$.
\end{lem}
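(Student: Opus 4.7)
\bigskip

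\noindent\textbf{Proof proposal for Lemma \ref{ApriBL}.} The plan is to test the variational equation \eqref{BL1} against the solution itself and extract the bound from the coercivity of $B$. First I would set $v=u$ in \eqref{BL1} to obtain
\[
B[u,u]=\langle f_0, u\rangle.
\]
The left‑hand side controls $u$ in $H^{2k}(\Omega)$: by the definition of $B$, we have $B[u,u]=\epsilon\|u\|_{L^2(\Omega)}^2+\epsilon\sum_{|\alpha|=2k}\|\partial^{\alpha}u\|_{L^2(\Omega)}^2$, and since $u\in H_0^{2k}(\Omega)$, the Gagliardo--Nirenberg interpolation inequality (exactly as in \eqref{eqvp-4}) yields
\[
\|u\|_{H^{2k}(\Omega)}^2\leq C\bigl(\|u\|_{L^2(\Omega)}^2+\|D^{2k}u\|_{L^2(\Omega)}^2\bigr),
\]
so $B[u,u]\geq \epsilon\, C^{-1}\|u\|_{H^{2k}(\Omega)}^{2}$ for a constant $C$ depending only on the problem data.

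The core of the proof is therefore to bound $\langle f_0,u\rangle$ by the $L^2$ norm of $u$ times a constant that only involves $\|m_0\|_{H^{2k-2}(\Omega)}$, $\|u_0\|_{H^{2k-1}(\Omega)}$, and the problem data. By Cauchy--Schwarz,
\[
|\langle f_0,u\rangle|\leq \|f_0\|_{L^2(\Omega)}\,\|u\|_{L^2(\Omega)},
\]
so I only need a quantitative bound on $\|f_0\|_{L^2(\Omega)}$. Expanding the divergence in \eqref{defBfmu} gives
\[
f_0 = -m_0+ Dm_0\cdot D_pH(x,Du_0)+m_0\bigl(\operatorname{div}_x D_pH\bigr)(x,Du_0)+m_0\, D^2_pH(x,Du_0):D^2u_0 +\phi-\epsilon(\xi+\Delta^{2k}\xi).
\]
Thanks to the choice $2k>\frac{d}{2}+2$, the Morrey embedding \eqref{eq:MorreyET} places $m_0\in C^{0,l}(\overline\Omega)$ and $Du_0\in C^{0,l}(\overline\Omega)$, hence $D_pH(x,Du_0)$, $\operatorname{div}_xD_pH(x,Du_0)$, and $D^2_pH(x,Du_0)$ are uniformly bounded (with bounds depending on $\|u_0\|_{H^{2k-1}(\Omega)}$ through the sup bound on $Du_0$ and the smoothness of $H$). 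The remaining factors $Dm_0$ and $D^2u_0$ belong to $H^{2k-3}(\Omega)\hookrightarrow L^2(\Omega)$, so each term can be estimated in $L^2$ by $C\bigl(\|m_0\|_{H^{2k-2}(\Omega)}+\|u_0\|_{H^{2k-1}(\Omega)}\bigr)$ up to data‑dependent constants; the terms with $\phi$ and $\xi$ are bounded purely in terms of problem data.

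Combining, I get $\epsilon C^{-1}\|u\|_{H^{2k}(\Omega)}^2\leq \|f_0\|_{L^2(\Omega)}\|u\|_{H^{2k}(\Omega)}$, and dividing by $\|u\|_{H^{2k}(\Omega)}$ (trivial if it vanishes) produces the required bound. The only mildly technical step is the expansion of $\operatorname{div}\bigl(m_0D_pH(x,Du_0)\bigr)$ and the verification that every resulting factor can be controlled either in $L^\infty$ via the Morrey embedding or in $L^2$ by the prescribed Sobolev norms of $m_0$ and $u_0$; this is where the assumption $2k>\tfrac d2+2$ is essential. No subtlety involving $\epsilon\to 0$ arises here since $\epsilon\in(0,1)$ is fixed and the constant is allowed to depend on $\epsilon$.
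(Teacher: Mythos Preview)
Your argument is correct and follows the same route as the paper: test \eqref{BL1} with $v=u$, use the coercivity of $B$, and bound $\langle f_0,u\rangle$ via the $L^2$-norm of $f_0$. The only cosmetic differences are that the paper uses Young's inequality (absorbing $\epsilon\|u\|_{L^2}^2$ into the left-hand side) and Poincar\'e's inequality where you use Gagliardo--Nirenberg and a direct division, and the paper simply invokes the argument of Proposition~\ref{ESBP1} for the finiteness of $\|f_0\|_{L^2}$ rather than expanding the divergence as you do.
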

\begin{proof}
Arguing  as in the proof of the Proposition~\ref{ESBP1}, we have $c_0:=\Vert -m_0 +
\div\big(m_0D_p H(x,Du_0)\big)+\phi -\epsilon (\xi+\Delta^{2k}\xi) \Vert_{ L^2(\Omega)}^2 \in \Rr^+_0$.
Then, using Young's 
inequality, we obtain
\begin{equation*}
\epsilon \left(\Vert u\Vert_{L^2(\Omega)}^2 +\Vert u\Vert_{H^{2k}_0(\Omega)}^2\right)
=
B[u,u] = \langle f_0, u \rangle
\leq
\epsilon\Vert u\Vert_{L^2(\Omega)}^2 + \frac{c_0}{4\epsilon}  .
\end{equation*}
Hence,  $\Vert u\Vert_{H^{2k}_0(\Omega)}^2
 \leq {c_0}/({4\epsilon^2})$, and the conclusion follows by Poincar\'e's inequality.
\end{proof}


\section{Proof of Theorem~\ref{MT}}\label{PfMT1}
This section is devoted to the proof of Theorem \ref{MT}. 
First, using  Schaefer's fixed-point theorem, we prove existence and uniqueness of a weak solution to \eqref{HighdRP} with $h\equiv 0$. Next, we apply this result to address the case of an arbitrary $h\in C^{\infty}(\overline\Omega)$. 

Let $\widetilde\Aa$ be the subset of \(\Aa\) (see  \eqref{DAa}) given by
\begin{equation*}
\widetilde\Aa:=\{w \in H^{2k-2}(\Omega)\ |\ w\geq 0 \},
\end{equation*}
 and consider the mapping $A: \widetilde\Aa \times H^{2k-1}(\Omega) \to \widetilde\Aa \times H^{2k-1}(\Omega)$ defined, for   $(m_0, u_0) \in \tilde A
\times H^{2k-1}(\Omega)$,  by
\begin{equation}\label{OpeA}
       A
    \begin{bmatrix}
      m_0  \\
      u_0        
    \end{bmatrix}
  :=
    \begin{bmatrix}
   m_0^* \\
   u_0^*
    \end{bmatrix},
\end{equation}
where $m_0^* \in \Aa$ is the unique solution to \eqref{VP1} and $u_0^*\in H_0^{2k}(\Omega) $ is the unique solution to \eqref{BL1}.
 In the following proposition, we  show  that  $A$ is  continuous and compact.


\begin{pro}\label{ACC} 
Let $H,g, V, \phi,$ and $\xi$ be as in Problem~\ref{MP}. Then,  the mapping $A: \widetilde\Aa \times H^{2k-1}(\Omega) \to \widetilde\Aa \times H^{2k-1}(\Omega)$ defined by \eqref{OpeA} is continuous and compact.
\end{pro}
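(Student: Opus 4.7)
The plan is to derive both properties from the uniform $H^{2k}$-bounds of Corollary~\ref{aprimvp1} and Lemma~\ref{ApriBL}, combined with the Sobolev embeddings $H^{2k-2}(\Omega)\hookrightarrow L^\infty(\Omega)$ and $H^{2k-1}(\Omega)\hookrightarrow W^{1,\infty}(\Omega)$ (valid since $2k>d/2+2$), together with the compact Rellich embedding $H^{2k}(\Omega)\hookrightarrow H^{2k-1}(\Omega)$.

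For compactness, I would first observe that on any bounded subset of $\widetilde\Aa\times H^{2k-1}(\Omega)$ the above embeddings yield uniform $L^\infty$-bounds on $m_0$ and $Du_0$, hence a uniform bound on $C_0=\|-u_0-H(x,Du_0)+g(m_0)-V\|_{L^\infty(\Omega)}$. Corollary~\ref{aprimvp1} and Lemma~\ref{ApriBL} then produce uniform $H^{2k}$-bounds on $m_0^*$ and $u_0^*$, and the compact Rellich embedding yields precompactness of $A(\text{bounded set})$ in $H^{2k-2}(\Omega)\times H^{2k-1}(\Omega)$.

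For continuity, I would argue by extraction of subsequences: assume $(m_n,u_n)\to(m_0,u_0)$ in $H^{2k-2}(\Omega)\times H^{2k-1}(\Omega)$, set $(m_n^*,u_n^*):=A(m_n,u_n)$, and let $(\tilde m,\tilde u)$ be a weak $H^{2k}\times H^{2k}$ limit of some subsequence; by compactness of $H^{2k}\hookrightarrow H^{2k-1}$ this subsequence also converges strongly in $H^{2k-1}\times H^{2k-1}$, and $\tilde m\geq 0$, $\tilde u\in H_0^{2k}(\Omega)$ by weak closedness. To identify $\tilde u=u_0^*$, I fix $v\in H_0^{2k}(\Omega)$ and pass to the limit in $B[u_n^*,v]=\langle f_{(m_n,u_n)},v\rangle$: the left-hand side tends to $B[\tilde u,v]$ by weak $H^{2k}$-convergence of $u_n^*$, and on the right, integrating the divergence term by parts (using $v=0$ on $\partial\Omega$) turns it into $-\int_\Omega m_nD_pH(x,Du_n)\cdot Dv\,\dx$, which converges thanks to the uniform convergences $m_n\to m_0$ and $Du_n\to Du_0$ and the smoothness of $H$. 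Uniqueness in Proposition~\ref{ESBP1} then forces $\tilde u=u_0^*$.

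The crux, and the main obstacle, is identifying $\tilde m=m_0^*$: the highest-order quadratic term $\epsilon\sum_{|\alpha|=2k}(\partial^\alpha w)^2$ entering the variational inequality of Proposition~\ref{PVI} is only weakly lower semicontinuous on $H^{2k}$, which precludes a direct passage to the limit in that inequality. My plan is to bypass Proposition~\ref{PVI} and use the minimization characterization of $m_n^*$ instead. Writing $\kappa_{(m,u)}:=-u-H(x,Du)+g(m)-V$, the same $L^\infty$-convergence arguments give $\kappa_{(m_n,u_n)}\to\kappa_{(m_0,u_0)}$ uniformly on $\overline\Omega$; combined with the uniform $H^{2k}$-bound on $m_n^*$, this yields $I_{(m_n,u_n)}[m_n^*]-I_{(m_0,u_0)}[m_n^*]\to 0$ and $I_{(m_n,u_n)}[m_0^*]\to I_{(m_0,u_0)}[m_0^*]$, so the defining inequality $I_{(m_n,u_n)}[m_n^*]\leq I_{(m_n,u_n)}[m_0^*]$ delivers $\limsup_n I_{(m_0,u_0)}[m_n^*]\leq I_{(m_0,u_0)}[m_0^*]$. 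On the other hand, $I_{(m_0,u_0)}$ is weakly lower semicontinuous on $H^{2k}(\Omega)$, so $I_{(m_0,u_0)}[\tilde m]\leq\liminf_n I_{(m_0,u_0)}[m_n^*]$. Uniqueness of the minimizer (Proposition~\ref{EMVP1}) then forces $\tilde m=m_0^*$, and a standard subsequence argument promotes this to convergence of the full sequence, completing the proof that $A$ is continuous.
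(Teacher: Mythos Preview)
Your argument is correct, and your treatment of compactness is essentially identical to the paper's. For continuity, however, you take a genuinely different route. The paper argues \emph{quantitatively}: for the $m$-component it exploits strong convexity of $I_0$ via the inequality $I_0[m_0^*]+I_n[m_n^*]\le I_0[\tfrac{m_0^*+m_n^*}{2}]+I_n[\tfrac{m_0^*+m_n^*}{2}]$ together with the parallelogram identity, obtaining a direct bound of $\|m_0^*-m_n^*\|_{L^2}^2+\sum_{|\alpha|=2k}\|\partial^\alpha m_0^*-\partial^\alpha m_n^*\|_{L^2}^2$ in terms of $\|\kappa_{(m_n,u_n)}-\kappa_{(m_0,u_0)}\|_{L^2}$; for the $u$-component it simply computes $B[u_0^*-u_n^*,u_0^*-u_n^*]=\langle f_0-f_n,u_0^*-u_n^*\rangle$. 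Both yield strong $H^{2k}$-convergence. Your approach is instead \emph{soft}: extract a weakly convergent subsequence, identify the limit via uniqueness (Lax--Milgram for $u$, the minimization characterization plus weak lower semicontinuity for $m$), and upgrade to the full sequence. The paper's method gives a slightly stronger conclusion (strong $H^{2k}$ rather than merely $H^{2k-1}$ convergence) and an explicit modulus of continuity, but your compactness-plus-uniqueness argument is perfectly adequate for the stated result and arguably more robust, since it would continue to work for functionals lacking the exact quadratic structure that makes the parallelogram trick available.
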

%
%
\begin{proof}

We start by proving that \(A\) is continuous.
Let $(m_0, u_0),\, ( m_n, u_n) \in \widetilde\Aa  \times H^{2k-1}(\Omega) $ be such that 
$m_n \to m_0$ in $H^{2k-2}(\Omega)$ and $u_n \to u_0$ in $H^{2k-1}(\Omega)$. We want to show  that 
$m_n^* \to m_0^*$ in $H^{2k-2}(\Omega)$ and $u_n^* \to u_0^*$ in $H^{2k-1}(\Omega)$, where 
\begin{equation*}
\begin{aligned}
        \begin{bmatrix}
         m_0^* \\
         u_0^*
        \end{bmatrix}
= A \begin{bmatrix}
      m_0  \\
      u_0        
    \end{bmatrix}
 \enspace \text{and} \enspace 
 \begin{bmatrix}
         m_n^* \\
         u_n^*
        \end{bmatrix}
= A \begin{bmatrix}
      m_n  \\
      u_n        
    \end{bmatrix}. 
\end{aligned}
\end{equation*}

Recalling \eqref{defImu} and \eqref{defBfmu}, we set  $I_n:=I_{(m_n, u_n)}$ and $f_n := f_{(m_n, u_n)}$. By the definition of \(A\), we have that 
$ (m_0^*,  u_0^*)$ and $ (m_n^*, u_n^*)$ belong to $  \Aa  \times H_0^{2k}(\Omega) $
 and satisfy, for all \(v\in L^2(\Omega)\),
\begin{equation*}
\begin{aligned}
I_0[m_0^*]=\min_{w \in \Aa} I_0[w],\enspace I_n[m_n^*]=\min_{w \in \Aa} I_n[w],
\enspace
B[u_0^*,v] = \langle f_0,v \rangle,\enspace B[u_n^*,v] = \langle f_n,v \rangle.
\end{aligned}
\end{equation*}
Also, using  \eqref{eq:MorreyET}, there exists a positive constant,
\(c>0\), independent of \(n\in\Nn\), such that 
\begin{equation}
\label{eq:bounds}
\begin{aligned}
\sup_{n\in\Nn} 
(\Vert m_0\Vert_{L^{\infty}(\Omega)} + \Vert m_n\Vert_{L^{\infty}(\Omega)} +\Vert u_0\Vert_{W^{1,\infty}(\Omega)} + \Vert u_n\Vert_{W^{1,\infty}(\Omega)})
< c.
\end{aligned}
\end{equation}
Then, because \(H\), \(D_pH\), and \(g\) are locally Lipschitz functions, we have
\begin{equation}
\label{eq:Lips}
\begin{aligned}
\tilde c:=\max\big \{Lip(H; \Omega\times B(0,c)), Lip(D_pH; \Omega\times B(0,c)),Lip(g;B(0,c))\big\}\in \Rr^+_0. 
\end{aligned}
\end{equation}

Using the fact that $m_n^*$ and $m_0^*$ are minimizers, we have
\begin{equation*}
I_0[m_0^*]+I_n[m_n^*] \leq I_0\left[\frac{m_0^* + m_n^*}{2}\right] + I_n\left[\frac{m_0^*
+ m_n^*}{2}\right].
\end{equation*}
Then,  exploiting the  second equality in \eqref{eqvp-5} in
the preceding estimate first, and then   using Young's inequality and  \eqref{eq:Lips}, we obtain 
\begin{equation}\label{eq1-6-1}
 \begin{split}
  &\int_{\Omega}\frac{\epsilon}{4}\Big[ \big(m_0^*-m_n^*\big)^2 + \sum_{|\alpha|=2k}\big(\partial^\alpha m_{0}^* - \partial^\alpha m_{n}^* \big)^2 \Big] \, \dx \\
  \leq
  &\int_{\Omega}\frac{1}{2}|m_0^* - m_n^*|\left( | u_0 - u_n |  
  +  \big| H(x,Du_0)-H(x,Du_n)\big|+ |g( m_0)
- g(m_n )| \right) \, \dx \\
  \leq
  &\int_{\Omega}\left[\frac{\epsilon}{8}(m_0^* - m_n^*)^2 + \frac{6}{\epsilon}\left( ( u_0 - u_n )^2
     +  \tilde c^2|Du_{0} - Du_{n}|^2+ \tilde c^2( m_0 - m_n )^2\right) \right]\, \dx.
\end{split}
\end{equation}
Because  
$m_n \to m_0$ in $H^{2k-2}(\Omega)$ and $u_n \to u_0$ in $H^{2k-1}(\Omega)$,   \eqref{eq1-6-1} yields 
\begin{equation*}
\lim_{n\to\infty}\Vert m_0^* - m_n^* \Vert_{L^2(\Omega)} = 0, \quad  \lim_{n\to\infty}\sum_{|\alpha|=2k}\Vert \partial^\alpha m_{0}^* - \partial^\alpha m_{n}^*\Vert_{L^2(\Omega)} = 0.
\end{equation*}
Then, by the Gagliardo--Nirenberg interpolation inequality, we have $m_n^*  \to m_0^*$ in $H^{2k-2}(\Omega)$. 

Next, we show that $u_n^*$ converges to $u_0^*$ in $H^{2k}(\Omega)$. By \eqref{defBfmu}, \eqref{eq:bounds}, and  \eqref{eq:Lips}, we have 
\begin{equation}
\label{eq:eq2-6-1}
\begin{aligned}
&\ \epsilon\Big(\Vert u_0^*-u_n^*\Vert_{L^2(\Omega)}^2 + \sum_{|\alpha|=2k}\Vert
\partial^\alpha u_{0}^* - \partial^\alpha u_{n}^*\Vert_{L^2(\Omega)}^2\Big)\\
 = &\ B[u_0^*-u_n^*, u_0^*-u_n^*]
= 
\langle f_0-f_n, u_0^*-u_n^* \rangle \\
 \leq &\, \int_\Omega \big[|m_n - m_0||u_0^*-u_n^* | + |m_0 D_pH(x,Du_0) - m_n D_pH(x,Du_n)||Du_0^*-Du_n^*|\big]\,\dx\\
\leq &\, \int_\Omega \big[|m_n - m_0||u_0^*-u_n^* | + \tilde c|m_0  - m_n  ||Du_0^*-Du_n^*|+c|Du_0-Du_n||Du_0^*-Du_n^*|\big]\,\dx.
\end{aligned}
\end{equation}
Using  Gagliardo--Nirenberg interpolation inequality together with 
Young's inequality, we obtain from \eqref{eq:eq2-6-1} that
\begin{equation*}
\begin{aligned}
\Vert u_0^*-u_n^*\Vert_{L^2(\Omega)}^2 + \sum_{|\alpha|=2k}\Vert
\partial^\alpha u_{0}^* - \partial^\alpha u_{n}^*\Vert_{L^2(\Omega)} \leq C\big(\Vert m_0 - m_n \Vert_{L^2(\Omega)}^2 + \Vert D u_0 -D u_n \Vert_{L^2(\Omega)}^2\big)
\end{aligned}
\end{equation*}
for some constant \(C>0\)  independent of \(n\in\Nn\). Arguing as before, we conclude that  $u_n^* \to u_0^*$ in $H^{2k}(\Omega)$.

Finally, we address the compactness of $A$. We want to show that if \(\{(m_n,u_n)\}_{n=1}^\infty\) is a bounded sequence in 
$\widetilde\Aa \times H^{2k-1}(\Omega)$, then \(\{A(m_n,u_n)\}_{n=1}\)    is pre-compact  in 
$\widetilde\Aa \times H^{2k-1}(\Omega)$. This is an immediate consequence of
\eqref{eq:MorreyET}, Corollary~\ref{aprimvp1}, Lemma~\ref{ApriBL},
and the compact embedding  \(H^{2k}(\Omega) \times H^{2k}(\Omega) \hookrightarrow H^{2k-2}(\Omega)\times H^{2k-1}(\Omega)\)  due to the Rellich--Kondrachov theorem.   
\end{proof}

As we mentioned before, the existence of weak solutions to Problem~\ref{MP} follows from Schaefer's fixed-point Theorem. We state   next the version of this result that
we use here,  whose proof is a straightforward adaptation of the proof of Theorem~4, Section~9.2.2, in \cite{E6}. 
\begin{theorem}\label{thm:SFPT} Let \(X\) be a convex and closed subset of a Banach space with the property
that   \(\lambda w\in X\) whenever \(w\in X\) and \(\lambda \in [0,1] \). Assume that \(A:X \to X\) is a continuous and compact mapping such that the set 
\begin{equation*}
\begin{aligned}
\big\{w\in X  |\  w=\lambda A[w] \hbox{ for some } \lambda \in [0,1]\big\}
\end{aligned}
\end{equation*}
is bounded. Then, \(A\) has a fixed point.
\end{theorem}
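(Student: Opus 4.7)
The plan is to reduce Theorem~\ref{thm:SFPT} to Schauder's fixed-point theorem via a radial retraction argument. Let $S := \{w\in X : w = \lambda A[w] \text{ for some } \lambda\in[0,1]\}$; taking $\lambda=0$ in the hypothesis on $X$ shows $0\in X$, and then $0=0\cdot A[0]\in S$, so $S$ is nonempty, while by assumption it is bounded. I would fix $M$ strictly greater than $\sup_{w\in S}\|w\|$ and set $B:=\{w\in X : \|w\|\leq M\}$. Since $X$ is convex and closed and the closed ball of radius $M$ is convex and closed, $B$ is a nonempty, convex, closed, and bounded subset of the ambient Banach space, providing a natural domain on which to seek a fixed point.

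Next I would define a truncated map $\tilde A:B\to B$ by
$$
\tilde A[w] := \begin{cases} A[w] & \text{if } \|A[w]\|\leq M,\\ \dfrac{M\,A[w]}{\|A[w]\|} & \text{if } \|A[w]\|> M. \end{cases}
$$
The crucial observation is that $\tilde A[w]\in X$ in both cases: trivially in the first, and in the second because $\tilde A[w]=\lambda A[w]$ with $\lambda=M/\|A[w]\|\in[0,1)$, which belongs to $X$ by the standing hypothesis. By construction $\|\tilde A[w]\|\leq M$, so $\tilde A$ sends $B$ into $B$. The radial retraction onto $\overline{B(0,M)}$ is Lipschitz, so $\tilde A$ is continuous and, because $A(B)$ is pre-compact, so is $\tilde A(B)$.

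Schauder's fixed-point theorem then yields $w^*\in B$ with $\tilde A[w^*]=w^*$, and the remaining step is to upgrade this to $w^*=A[w^*]$. If $\|A[w^*]\|\leq M$ the equality is immediate; otherwise $w^*=\lambda A[w^*]$ with $\lambda=M/\|A[w^*]\|\in[0,1)$, so $w^*\in S$ and hence $\|w^*\|<M$ by the strict choice of $M$. But in that branch $\|w^*\|=\|\tilde A[w^*]\|=M$, contradicting the previous strict inequality. The main subtle point, and the only place where the hypothesis $\lambda w\in X$ (for $w\in X$, $\lambda\in[0,1]$) is used, is guaranteeing that the retracted values remain in $X$; without this star-shape property the truncation could leave $X$ and Schauder's theorem would not apply in the required subset.
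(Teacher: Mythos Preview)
Your proof is correct. The paper does not actually give its own proof of this theorem but instead refers the reader to Theorem~4, Section~9.2.2, in \cite{E6}, calling the argument a ``straightforward adaptation''; your radial-retraction reduction to Schauder's theorem is exactly that standard argument, with the observation that the star-shape hypothesis on $X$ keeps the truncated map inside $X$ being precisely the adaptation required.
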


\begin{pro}\label{ExiUniS1}
Consider Problem~\ref{MP}, let $A$ be the mapping defined in \eqref{OpeA}, and suppose that Assumptions \ref{Hcoer}--\ref{AWC1} hold for some \(\gamma>1\). Then, there exists a unique weak solution, $(m,u) \in H^{2k}(\Omega) \times H^{2k}(\Omega)$, to Problem~\ref{MP} with \(h=0\) in the sense of Definition~\ref{DMPWS1}.
\end{pro}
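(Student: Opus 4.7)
The plan is to apply Schaefer's fixed-point theorem (Theorem~\ref{thm:SFPT}) to $A$ on $X := \widetilde\Aa \times H^{2k-1}(\Omega)$, which is a convex, closed subset of the Banach space $H^{2k-2}(\Omega) \times H^{2k-1}(\Omega)$ satisfying $\lambda X \subseteq X$ for every $\lambda \in [0,1]$ (since $\widetilde\Aa$ is a cone). Proposition~\ref{ACC} provides the continuity and compactness of $A$, so the remaining task is to bound the set $S := \{(m,u) \in X \mid (m,u) = \lambda A[(m,u)] \text{ for some } \lambda \in [0,1]\}$ uniformly in $\lambda$; uniqueness will then follow from the monotonicity Assumption~\ref{Hmono}.

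To bound $S$, I fix $(m,u) \in S$ with $\lambda \in (0,1]$ (the case $\lambda = 0$ gives $(m,u)=(0,0)$) and write $(m,u) = \lambda(m^*,u^*)$ with $(m^*,u^*) = A[(m,u)] \in \Aa \times H_0^{2k}(\Omega)$ (using $h = 0$). Testing the variational inequality \eqref{VI1} for $m^*$ with $w = 0$ and $w = 2m^* \in \Aa$ produces opposite inequalities whose combination, after substituting $m^* = m/\lambda$, is the equality $\int_\Omega m\bigl(u + H(x,Du) - g(m) + V\bigr)\,\dx = (\epsilon/\lambda)\int_\Omega\bigl(m^2 + \sum_{|\alpha|=2k}(\partial^\alpha m)^2\bigr)\,\dx$. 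Testing the bilinear-form equation \eqref{BL1} for $u^*$ with $v = u \in H_0^{2k}(\Omega)$, integrating by parts the divergence term (boundary contributions vanish since $u$ vanishes to order $2k-1$ on $\partial\Omega$), and substituting $u^* = u/\lambda$ gives $(\epsilon/\lambda)\int_\Omega\bigl(u^2 + \sum_{|\alpha|=2k}(\partial^\alpha u)^2\bigr)\,\dx + \int_\Omega m u\,\dx + \int_\Omega m\,D_p H(x,Du)\cdot Du\,\dx = \int_\Omega \phi u\,\dx - \epsilon\int_\Omega(\xi + \Delta^{2k}\xi)u\,\dx$. Subtracting the second relation from the first, the $\int_\Omega m u\,\dx$ terms cancel, and one obtains the following identity, which mirrors \eqref{E3.2} with $\epsilon$ replaced by $\epsilon/\lambda$ in the highest-order terms:
\begin{equation*}
\frac{\epsilon}{\lambda}\Big[\|m\|_{L^2}^2 + \|u\|_{L^2}^2 + \sum_{|\alpha|=2k}\bigl(\|\partial^\alpha m\|_{L^2}^2 + \|\partial^\alpha u\|_{L^2}^2\bigr)\Big] + \int_\Omega m\bigl({-}H + D_p H\cdot Du\bigr)\,\dx + \int_\Omega m g(m)\,\dx - \int_\Omega m V\,\dx = \int_\Omega \phi u\,\dx - \epsilon\int_\Omega(\xi + \Delta^{2k}\xi)u\,\dx.
\end{equation*}
Invoking Assumption~\ref{Hcoer} on the first integral, a weighted Young's inequality with Assumption~\ref{gint} to absorb $\int_\Omega m\,\dx$ into $\int_\Omega m g(m)\,\dx$, and Assumption~\ref{AWC1} to lower-bound $\int_\Omega m g(m)\,\dx$, while bounding the right-hand side by $C\|u\|_{L^1}$ (using $\epsilon \leq 1$ and smoothness of $\phi,\xi$), one deduces $(\epsilon/\lambda)\|u\|_{L^2}^2 \leq C\|u\|_{L^1} + C'$ with $C,C'$ depending only on $\epsilon$ and the problem data. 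The crucial step is the loop closure on $\|u\|_{L^1}$: by Cauchy--Schwarz and $\lambda \leq 1$, $\|u\|_{L^1}^2 \leq |\Omega|\,\|u\|_{L^2}^2 \leq (|\Omega|/\epsilon)(C\|u\|_{L^1} + C')$, a quadratic inequality whose solution yields $\|u\|_{L^1} \leq C_0(\epsilon)$ uniformly in $\lambda$. Substituting back bounds the entire left-hand side of the main identity; since $\epsilon/\lambda \geq \epsilon$, the Gagliardo--Nirenberg interpolation inequality then furnishes a uniform bound on $\|m\|_{H^{2k}} + \|u\|_{H^{2k}}$, hence on $(m,u)$ in $H^{2k-2}(\Omega) \times H^{2k-1}(\Omega)$. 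Schaefer's theorem therefore delivers a fixed point $(m,u) = A[(m,u)]$, and this fixed point satisfies (E1)--(E3) of Definition~\ref{DMPWS1} directly from \eqref{VP1} and \eqref{BL1}.

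For uniqueness, given two weak solutions $(m_i, u_i)$, $i = 1,2$, I will test (E2) for $(m_i, u_i)$ with $w = m_{3-i} \in \Aa$ and sum the two resulting inequalities, and analogously test (E3) for $(m_i, u_i)$ with $v = u_{3-i} - u_i \in H_0^{2k}(\Omega)$ and sum. Adding these two sums, the left-hand side equals $-\langle F[m_1,u_1] - F[m_2,u_2],\,(m_1,u_1)-(m_2,u_2)\rangle$, which is $\leq 0$ by Assumption~\ref{Hmono}, while the right-hand side is $\epsilon$ times a nonnegative sum of squared $L^2$-norms of $m_1-m_2$, $u_1-u_2$, and their $2k$-th derivatives. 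All these squared differences must therefore vanish, forcing $(m_1,u_1)=(m_2,u_2)$. The hard part of the whole argument is closing the loop on $\|u\|_{L^1}$ via the quadratic self-referential inequality, which is made possible by the clean cancellation of $\int_\Omega m u\,\dx$ in the subtraction of the two integral relations --- a structural feature that relies on $h = 0$ and on the homogeneity-based equality produced by testing the VI with $w = 0$ and $w = 2m^*$.
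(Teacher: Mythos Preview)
Your proof is correct. For uniqueness, your argument is equivalent to the paper's: testing (E3) with $v=u_{3-i}-u_i$ and summing amounts to the paper's choice $v=u_1-u_2$ followed by subtraction; both lead to the same comparison with Assumption~\ref{Hmono}.

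For existence, both you and the paper invoke Proposition~\ref{ACC} and Theorem~\ref{thm:SFPT}, but the a~priori bound on $\{(m,u)=\lambda A[(m,u)]\}$ is obtained differently. The paper tests the $\lambda$-scaled variational inequality with $w=\phi$ and the bilinear equation with $v=u_\lambda$, thereby reproducing the structure of Corollary~\ref{apriWS1} verbatim to obtain
\[
\lambda\int_\Omega\big[m_\lambda g(m_\lambda)+m_\lambda|Du_\lambda|^\gamma+\phi|Du_\lambda|^\gamma\big]\,\dx+\epsilon\!\int_\Omega\Big[m_\lambda^2+u_\lambda^2+\textstyle\sum_{|\alpha|=2k}\big((\partial^\alpha m_\lambda)^2+(\partial^\alpha u_\lambda)^2\big)\Big]\,\dx\leq C,
\]
after which Assumption~\ref{AWC1} and positivity immediately isolate the $\epsilon$-square term. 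Your route---testing with $w=0$ and $w=2m^*$ to force an \emph{equality}, then closing a quadratic inequality on $\|u\|_{L^1}$---is a genuine variant: it avoids carrying the $\phi H(x,Du)$ and $\phi g(m)$ terms but pays with the self-referential step. The paper's choice is shorter because it reuses the earlier a~priori machinery; yours extracts a clean identity (note, incidentally, that your displayed identity arises by \emph{adding} the two rearranged relations, not subtracting---the formula itself is correct). Both land on the same $\lambda$-uniform $H^{2k}$ bound via Gagliardo--Nirenberg.
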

%
%
\begin{proof}
\textit{(Existence) }Fix $\lambda \in
[0,1]$, and let $(m_\lambda, u_\lambda)\in
\widetilde\Aa \times H^{2k-1}(\Omega)$
be such that
\begin{equation*}
   \begin{bmatrix}
      m_\lambda \\
      u_\lambda 
    \end{bmatrix}
  =  \lambda A
    \begin{bmatrix}
      m_\lambda  \\
      u_\lambda
    \end{bmatrix}.
\end{equation*}
If $\lambda=0$, then $(m_\lambda,u_\lambda)=(0,0)$.
Assume that $0<\lambda \leq1$; then,
by the definition of \(A\), Proposition~\ref{EMVP1},
Corollary~\ref{PVI}, and Proposition~\ref{ESBP1},
we have \(\frac{m_\lambda}\lambda \in\Aa\),
\(\frac{u_\lambda}\lambda \in H^{2k}_0(\Omega)\),
and
\begin{equation*}
\begin{aligned}
&\int_\Omega \lambda \big( -u_\lambda
- H(x,Du_\lambda) + g(m_\lambda) - V
\big)(w-m_\lambda) \, \dx\\
&\qquad+\int_{\Omega}  \epsilon m_\lambda(w-m_\lambda)+
  \epsilon \sum_{|\alpha|=2k}\partial^\alpha
m_\lambda(\partial^\alpha w-\partial^\alpha
m_\lambda)\, \dx
  \geq 0,\\ 
&\int_{\Omega} \Big[
   \lambda\big(m_\lambda -\div\big(m_\lambda
D_p H(x,Du_\lambda)\big)-\phi\big)v\,
\Big]\dx\\ 
&\qquad +\int_\Omega\bigg[
  \epsilon \Big(u_\lambda v + \sum_{|\alpha|=2k}
\partial^\alpha u_\lambda\partial^\alpha
v\Big)+\epsilon(\xi+\Delta^{2k}\xi)v
\bigg]\, \dx=0,
\end{aligned}
\end{equation*}
for all \(w\in\Aa\) and \(v\in H^{2k}_0(\Omega)\).
Hence, arguing as in Corollary~\ref{apriWS1},
we have
\begin{equation*}\label{E3.1}
\begin{aligned}
  &\int_{\Omega} \lambda \left[m_\lambda
g(m_\lambda)+m_\lambda |Du_\lambda|^\gamma+
 \phi |Du_{\lambda}|^\gamma \right]\,
\dx
 \\&\qquad+\epsilon \int_{\Omega} \Big[
m_\lambda^2+u_\lambda^2+\sum_{|\alpha|=2k}\big((\partial^\alpha
m_\lambda)^2+(\partial^\alpha u_{\lambda})^2\big)
 \Big] \, \dx
\leq C, 
\end{aligned}
\end{equation*}
where $C$ is a positive constant independent
of $\lambda$. Consequently, by
Assumption~\ref{AWC1} and the positivity
of \(m_\lambda\) and \(\phi\), we have

\begin{equation*}
\begin{aligned}
\epsilon \int_{\Omega} \Big[
m_\lambda^2+u_\lambda^2+\sum_{|\alpha|=2k}\big((\partial^\alpha
m_\lambda)^2+(\partial^\alpha u_{\lambda})^2\big)
 \Big] \, \dx
\leq C
\end{aligned}
\end{equation*}
where $C$ is another positive constant
independent
of $\lambda$.
Invoking the Gagliardo--Nirenberg interpolation
inequality,  we conclude that $(m_\lambda,
u_\lambda)$ is uniformly bounded in
$H^{2k}(\Omega) \times H^{2k}(\Omega)$
with respect to $\lambda$. This fact
and Proposition~\ref{ACC} allow us to
use Theorem~\ref{thm:SFPT} to conclude
that \(A \) has a fixed point, \((m,u)\in
\tilde \Aa \times H^{2k-1}(\Omega)\).
Finally, as before,  using the definition of
 \(A\), Proposition~\ref{EMVP1},
Corollary~\ref{PVI}, and Proposition~\ref{ESBP1},
we conclude that 
$(m,u)$ belongs to $  H^{2k}(\Omega)
\times H^{2k}(\Omega)$ and satisfies
 (E1)--(E3)
 with \(h=0\).
 
\textit{(Uniqueness) }
Assume that there are two fixed points, $(m_1,u_1)$ and $(m_2,u_2)$.
Taking $w=m_2$ in (E2) for \((u_1,m_1)\) and   $w=m_1$ in (E2) for \((u_2,m_2)\), and then summing the resulting inequalities, we have
\begin{equation}\label{ineqUni-1}
\begin{aligned}
&
\int_\Omega\big[u_1-u_2+H(x,Du_1)-H(x,Du_2)-(g(m_1)-g(m_2))\big](m_1-m_2)\,\dx\\
&\qquad-\int_\Omega \Big[\epsilon(m_1-m_2)^2+\epsilon\sum_{|\alpha|=2k}(\partial^{\alpha}m_1-\partial^{\alpha} m_2)^2\Big]\, \dx
\geq0.
\end{aligned}
\end{equation}
Because $u_1-u_2\in H_0^{2k}(\Omega)$, choosing $v=u_1-u_2$ in (E3) for  \((u_1,m_1)\) and \((u_2,m_2)\), and then subtracting the resulting equalities, we obtain
\begin{equation}\label{ineqUni-2}
\begin{aligned}
&\int_\Omega\Big(m_1-m_2-\div\big(m_1D_pH(x,Du_1)-m_2D_pH(x,Du_2)\big)\Big)(u_1-u_2)\,\dx\\
&\qquad+\int_\Omega\Big[\epsilon (u_1-u_2)^2+\epsilon\sum_{|\alpha|=2k}(\partial^\alpha u_1-\partial ^\alpha u_2)^2\Big]=0.
\end{aligned}
\end{equation}
Subtracting \eqref{ineqUni-1} from \eqref{ineqUni-2} and using Assumption \ref{Hmono}, we have
\begin{equation*}
\begin{aligned}
0\geq &
\int_\Omega\Big[
\epsilon(m_1-m_2)^2+\epsilon\sum_{|\alpha|=2k}(\partial^\alpha m_1-\partial^\alpha m_2)^2
+\epsilon(u_1-u_2)^2+\epsilon\sum_{|\alpha|=2k}(\partial^\alpha u_1-\partial^\alpha u_2)^2
\Big]\,\dx\\
&+
\left<
   F
  \begin{bmatrix}
      m_1  \\
      u_1 
  \end{bmatrix}
  -
   F
  \begin{bmatrix}
      m_2  \\
      u_2 
  \end{bmatrix},
  \begin{bmatrix}
      m_1  \\
      u_1
  \end{bmatrix}
  -
  \begin{bmatrix}
      m_2  \\
      u_2
  \end{bmatrix}
\right>
\geq 0.
\end{aligned}
\end{equation*}
Invoking Assumption~\ref{Hmono} once more, we conclude that  the integral in the preceding estimate is  equal to zero, from which we the identity $(m_1,u_1)=(m_2,u_2)$ follows.
\end{proof}

\begin{proof}[Proof of Theorem~\ref{MT}] 
Define, for \(x\in\Omega\) and \(p\in\Rr^d\),  $\widehat H(x,p):=H(x,p+Dh(x))$, $\widehat{V}(x):= V(x) + h(x)$, and 
$\widehat{\xi} (x):= \xi(x) +h(x)
$.

Note that \(H\) satisfies Assumptions~\ref{Hcoer}--\ref{Bderi} for some  \(\gamma>1\) if and only if  \(\widehat H\) satisfies Assumptions~\ref{Hcoer}--\ref{Bderi}
for the same \(\gamma\). Moreover, \((u,m)\in H^{2k}(\Omega)
\times H^{2k}(\Omega)\) satisfies (E1)--(E3)
if and only if \((\hat u,m):=(u -h,m)\in H^{2k}(\Omega)
\times H^{2k}(\Omega)\) satisfies (E1)--(E3) with \(h=0\) and with \(H\), \(V\), and \(\xi\) replaced by \(\widehat H\), \(\widehat V\), and \(\widehat \xi\), respectively. 

To conclude, it suffices to invoke Proposition~\ref{ExiUniS1}, which gives existence and uniqueness of a pair  \((\hat u,m)\in H^{2k}(\Omega)
\times H^{2k}(\Omega)\) satisfying (E1)--(E3) with \(h=0\) and
with \(H\), \(V\), and \(\xi\) replaced by \(\widehat H\), \(\widehat V\),
and \(\widehat \xi\), respectively. 
\end{proof}

\section{Proof of Theorem~\ref{TOP}}\label{PfMT2}
Here, we prove Theorem~\ref{TOP}. First, we study a compactness property of the unique weak solution to Problem~\ref{MP}. Then, we introduce a linear functional, $F_\epsilon$, corresponding to the equations~\eqref{HighdRP} in Problem~\ref{MP} and address its monotonicity. Finally, using Minty's method, we prove the existence of a weak solution to Problem~\ref{OP}.

\begin{lem}\label{WCWS1}
Consider Problem~\ref{MP} and suppose that Assumptions~\ref{Hcoer}--\ref{gwc}  hold for some \(\gamma>1\). Let $(m_\epsilon, u_\epsilon) \in H^{2k}(\Omega) \times H^{2k}(\Omega)$ be the unique weak solution to Problem~\ref{MP}. Then, there exists $(m,u)\in L^1(\Omega) \times W^{1,\gamma}(\Omega)$ such that $m \geq 0$, $u=h$ on $\partial\Omega$ in the sense of traces, and $(m_\epsilon, u_\epsilon)$ converges to $(m,u)$ weakly in $L^1(\Omega)\times W^{1,\gamma}(\Omega)$ as $\epsilon \to 0$, extracting a subsequence if necessary. 
\end{lem}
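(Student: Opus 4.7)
The plan is to extract weakly convergent subsequences of $(m_\epsilon, u_\epsilon)$ directly from the uniform a priori estimates established in Section~\ref{Prows}, and then verify that the limit inherits the positivity and the Dirichlet trace. No delicate compensated compactness or concentration argument should be needed at this stage: the nonlinear structure of the Hamilton--Jacobi and transport equations will only come into play later, in the proof of Theorem~\ref{TOP} via Minty's method.

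For the value function, I would invoke Corollary~\ref{apriDu1} to get $\|u_\epsilon\|_{W^{1,\gamma}(\Omega)}\leq C$ uniformly in $\epsilon\in(0,1)$. Since $\gamma>1$, the space $W^{1,\gamma}(\Omega)$ is reflexive, so a subsequence (not relabeled) satisfies $u_\epsilon \rightharpoonup u$ weakly in $W^{1,\gamma}(\Omega)$ for some $u\in W^{1,\gamma}(\Omega)$. For the density, Corollary~\ref{apriWS1} together with Assumption~\ref{AWC1} yields
\[
\int_\Omega m_\epsilon g(m_\epsilon)\,\dx \leq C
\]
uniformly in $\epsilon$, and Assumption~\ref{gwc} then provides a further subsequence with $m_\epsilon \rightharpoonup m$ weakly in $L^1(\Omega)$. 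The positivity $m\geq 0$ a.e.\ is immediate from the weak convergence: for any nonnegative $\psi\in L^\infty(\Omega)$, we have $\int_\Omega m_\epsilon\psi\,\dx \geq 0$, which passes to the limit and suffices.

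The step requiring the most care is the preservation of the boundary condition. By (E1) in Definition~\ref{DMPWS1}, $u_\epsilon - h\in H_0^{2k}(\Omega)$ for every $\epsilon$; since $2k>d/2+2$, Morrey's embedding gives $H^{2k}(\Omega)\hookrightarrow C^1(\overline\Omega)$, and because functions in $H_0^{2k}(\Omega)$ vanish together with their first-order derivatives on $\partial\Omega$, we have the inclusion $H_0^{2k}(\Omega)\subset W_0^{1,\gamma}(\Omega)$. The trace operator $\mathrm{tr}:W^{1,\gamma}(\Omega)\to L^\gamma(\partial\Omega)$ is linear and continuous, hence weakly continuous, so $\mathrm{tr}(u_\epsilon)\rightharpoonup \mathrm{tr}(u)$ in $L^\gamma(\partial\Omega)$; since $\mathrm{tr}(u_\epsilon)=h|_{\partial\Omega}$ for every $\epsilon$, one concludes $\mathrm{tr}(u)=h$ on $\partial\Omega$. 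Equivalently, $u_\epsilon - h \rightharpoonup u - h$ weakly in $W^{1,\gamma}(\Omega)$ and the (convex, strongly closed, hence weakly closed) subspace $W_0^{1,\gamma}(\Omega)$ contains the limit. I do not anticipate a substantive obstacle: the $\sqrt{\epsilon}$-weighted higher-order compactness of Corollary~\ref{aprisqrt} is not needed here, as the $W^{1,\gamma}$ bound on $u_\epsilon$ alone delivers both the weak limit and the boundary trace.
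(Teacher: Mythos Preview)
Your proposal is correct and follows essentially the same approach as the paper: uniform $W^{1,\gamma}$ bounds from Corollary~\ref{apriDu1} give the weak limit $u$ (with the trace preserved by weak closedness of $W_0^{1,\gamma}(\Omega)$, a point you spell out more carefully than the paper does), and the uniform bound on $\int_\Omega m_\epsilon g(m_\epsilon)\,\dx$ from Corollary~\ref{apriWS1} feeds into Assumption~\ref{gwc} to produce the weak $L^1$ limit $m\geq 0$. One cosmetic remark: to isolate $\int_\Omega m_\epsilon g(m_\epsilon)\,\dx\leq C$ from \eqref{aprimu} you only need the nonnegativity of $m_\epsilon$, $\phi$, and the $\epsilon$-terms, not Assumption~\ref{AWC1}.
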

\begin{proof}
The existence of \(u\in W^{1,\gamma}(\Omega)\) as stated follows from the fact that  $u_\epsilon=h$ on $\partial\Omega$ in
the sense of traces and, by Corollary~\ref{apriDu1}, $u_\epsilon$ is uniformly bounded in $W^{1,\gamma}(\Omega) $ with respect to \(\epsilon\).

On the other hand, by Corollary~\ref{apriWS1}
and the positivity
of \(m_\epsilon\) and \(\phi\), we have
\begin{equation*}
\sup_{\epsilon\in(0,1)}
\int_\Omega m_\epsilon g(m_\epsilon)\, \dx <\infty.
\end{equation*}
Therefore, by Assumption~\ref{gwc}, there exists $m\in L^1(\Omega)$ such that $m_\epsilon \rightharpoonup m$ in $L^1(\Omega)$ as $\epsilon \to 0$, extracting a subsequence if necessary. Because $m_\epsilon\geq0$, we have $m\geq 0$.
\end{proof}

Fix $(\eta, v) \in H^{2k}(\Omega) \times H^{2k}(\Omega)$, let \(F[\eta,v]\) be the functional introduced in \eqref{DRF2},  and let 
 $F_{\epsilon} [\eta,v]: H^{2k}(\Omega) \times H^{2k}(\Omega) \to \Rr$ be the linear functional given by
\begin{equation}\label{DRF1}
\begin{aligned}
\left<
   F_\epsilon
  \begin{bmatrix}
      \eta \\
      v
  \end{bmatrix},
  \begin{bmatrix}
      w_1 \\
      w_2
  \end{bmatrix}
\right>
:= & \left<
   F
  \begin{bmatrix}
      \eta \\
      v
  \end{bmatrix},
  \begin{bmatrix}
      w_1 \\
      w_2
  \end{bmatrix}
\right> +\int_{\Omega}\Big(\epsilon \eta w_1 + \epsilon \sum_{|\alpha|=2k}\partial^\alpha \eta \partial^\alpha w_1 \Big)\,\dx
\\
&+\int_{\Omega}\Big[\epsilon (v + \xi )w_2 + \epsilon \sum_{|\alpha|=2k}\partial^\alpha (v+\xi)\partial^\alpha w_2\Big]\,\dx.
\end{aligned}
\end{equation}
Next, we prove the monotonicity of $F_\epsilon$ over $\Aa\times H^{2k}_h(\Omega)$, where, we recall, \(\Aa\) and 
$H_h^{2k}(\Omega)$ are given by \eqref{DAa}--\eqref{DHh2k}.
%
%
\begin{lem}\label{MORF}
Let $H,g,V,\phi,\xi$, and $h$ be as in Problem~\ref{MP}, let $F_\epsilon$ be given by \eqref{DRF1}, and suppose that Assumption~\ref{Hmono} holds. 
Then, for any $(\eta_1, v_1)$, $(\eta_2,  v_2) \in \Aa\times 
H_h^{2k}(\Omega)$, we have
\begin{equation*}
\left<
  F_{\epsilon}
  \begin{bmatrix}
      \eta_1  \\
      v_1 
  \end{bmatrix}
     -
      F_{\epsilon}
      \begin{bmatrix}
          \eta_2  \\
          v_2 
      \end{bmatrix},
      \begin{bmatrix}
          \eta_1  \\
          v_1
      \end{bmatrix}
    -
   \begin{bmatrix}
         \eta_2  \\
         v_2
  \end{bmatrix}
\right>
\geq 0.
\end{equation*}
\end{lem}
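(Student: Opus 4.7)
The plan is to directly compute the inner product
\[
\left\langle F_\epsilon\!\begin{bmatrix}\eta_1\\ v_1\end{bmatrix} - F_\epsilon\!\begin{bmatrix}\eta_2\\ v_2\end{bmatrix},\ \begin{bmatrix}\eta_1-\eta_2\\ v_1-v_2\end{bmatrix}\right\rangle
\]
by exploiting the decomposition of $F_\epsilon$ in \eqref{DRF1} into the original functional $F$ plus $\epsilon$-regularization terms, and then to show that each of the two resulting contributions is non-negative.

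First, I would split the expression using linearity of the pairing, writing it as the sum of
\[
\left\langle F\!\begin{bmatrix}\eta_1\\ v_1\end{bmatrix} - F\!\begin{bmatrix}\eta_2\\ v_2\end{bmatrix},\ \begin{bmatrix}\eta_1-\eta_2\\ v_1-v_2\end{bmatrix}\right\rangle
\]
and the difference of the $\epsilon$-terms. The first quantity is non-negative by Assumption~\ref{Hmono}, since $(\eta_i,v_i)\in\Aa\times H_h^{2k}(\Omega)\subseteq \Aa\times H_h^{2}(\Omega)$, which is precisely the set on which Assumption~\ref{Hmono} is posed. The main observation for the second quantity is that the term involving $\xi$ is the same in $F_\epsilon[\eta_1,v_1]$ and $F_\epsilon[\eta_2,v_2]$ (it does not depend on $(\eta_i,v_i)$), so it cancels in the difference.

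What remains after cancellation is exactly
\[
\int_\Omega \epsilon(\eta_1-\eta_2)^2\,\dx + \int_\Omega \epsilon\sum_{|\alpha|=2k}(\partial^\alpha\eta_1-\partial^\alpha\eta_2)^2\,\dx + \int_\Omega \epsilon(v_1-v_2)^2\,\dx + \int_\Omega \epsilon\sum_{|\alpha|=2k}(\partial^\alpha v_1-\partial^\alpha v_2)^2\,\dx,
\]
which is manifestly non-negative (in fact strictly positive unless $(\eta_1,v_1)=(\eta_2,v_2)$, up to a lower-order argument using the Gagliardo--Nirenberg interpolation inequality). Adding the two non-negative contributions yields the desired inequality.

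There is essentially no obstacle here: the proof is a short algebraic manipulation. The only subtle point, worth noting explicitly for the reader, is the cancellation of the $\xi$-dependent terms, which is why the regularization in Problem~\ref{MP} preserves monotonicity. This is exactly the feature the authors highlight in the introduction when they say that "the choice of boundary conditions is critical to preserve monotonicity," and it is what makes $F_\epsilon$ amenable to Minty's method in the proof of Theorem~\ref{TOP}.
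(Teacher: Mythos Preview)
Your proposal is correct and follows essentially the same approach as the paper: split $F_\epsilon$ into $F$ plus the $\epsilon$-regularization terms, invoke Assumption~\ref{Hmono} for the $F$-part, and observe that the remaining $\epsilon$-terms yield a sum of squares (with the $\xi$-contributions cancelling in the difference). The paper's proof additionally records that $v_1-v_2\in H_0^{2k}(\Omega)$ and mentions an integration by parts, but given the form \eqref{DRF1} of $F_\epsilon$ no integration by parts is actually needed, so your write-up is in fact slightly cleaner on this point.
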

%
\begin{proof}
Let $(\eta_1, v_1)$, $(\eta_2,  v_2) \in \Aa\times 
H_h^{2k}(\Omega)$. Then, $v_1 - v_2 \in H_0^{2k}(\Omega)$; thus, using Assumption~\ref{Hmono} and integrating by parts, we obtain
\begin{align*}
&\left<
  F_{\epsilon}
  \begin{bmatrix}
      \eta_1  \\
      v_1 
  \end{bmatrix}
  -
  F_{\epsilon}
  \begin{bmatrix}
      \eta_2  \\
      v_2 
  \end{bmatrix},
  \begin{bmatrix}
      \eta_1  \\
      v_1
  \end{bmatrix}
  -
  \begin{bmatrix}
      \eta_2  \\
      v_2
  \end{bmatrix}
\right>\\
\geq&\int_{\Omega}
\epsilon\Big[(\eta_1-\eta_2)^2 + (v_1-v_2)^2+ \sum_{|\alpha|=2k}\big((\partial^\alpha \eta_1-\partial^\alpha \eta_2)^2 
 + (\partial^\alpha v_1-\partial^\alpha v_2)^2\big)\Big]\, \dx
\geq 0.\qedhere
\end{align*}
\end{proof}


\begin{proof}[Proof of Theorem~\ref{TOP}]
Let $(m_\epsilon, u_\epsilon)\in H^{2k}(\Omega) \times H^{2k}(\Omega)$ be the unique weak solution to Problem~\ref{MP} in the sense of Definition~\ref{DMPWS1}. Fix  $(\eta, v)\in \Aa \times H_h^{2k}(\Omega)$. By (E2) and (E3),  we have
\begin{equation*}
\left<
  F_\epsilon
  \begin{bmatrix}
      m_\epsilon \\
      u_\epsilon
  \end{bmatrix},
  \begin{bmatrix}
      \eta  \\
      v 
  \end{bmatrix}
  -
  \begin{bmatrix}
      m_\epsilon \\
      u_\epsilon
  \end{bmatrix}
\right>
\geq  0.
\end{equation*}
Thus, by Lemma~\ref{MORF},
\begin{equation}\label{eq:pfe}
\begin{aligned}
0&\leq \left<
  F_{\epsilon}
  \begin{bmatrix}
      \eta \\
      v
  \end{bmatrix}-F_{\epsilon} 
  \begin{bmatrix}
      m_\epsilon \\
      u_\epsilon
  \end{bmatrix},
  \begin{bmatrix}
      \eta  \\
      v 
  \end{bmatrix}
  -
  \begin{bmatrix}
      m_\epsilon \\
      u_\epsilon
  \end{bmatrix}
\right> \leq \left<
  F_{\epsilon}
  \begin{bmatrix}
      \eta \\
      v
  \end{bmatrix},
  \begin{bmatrix}
      \eta  \\
      v 
  \end{bmatrix}
  -
  \begin{bmatrix}
      m_\epsilon \\
      u_\epsilon
  \end{bmatrix}
\right>= \left<
  F
  \begin{bmatrix}
      \eta \\
      v
  \end{bmatrix},
  \begin{bmatrix}
      \eta  \\
      v 
  \end{bmatrix}
  -
  \begin{bmatrix}
      m_\epsilon \\
      u_\epsilon
  \end{bmatrix}
\right> + c_\epsi,
\end{aligned}
\end{equation}
where
\begin{equation*}
\begin{aligned}
c_\epsi:= &\int_{\Omega}\Big({\epsilon} \eta (\eta- m_\epsi) +  {\epsilon}\sum_{|\alpha|=2k}{}\partial^\alpha
\eta \partial^\alpha (\eta- m_\epsi) \Big)\,\dx\\ &
+\int_{\Omega}\Big[{\epsilon}(v + \xi )(v- u_\epsi)+ \epsilon \sum_{|\alpha|=2k}\partial^\alpha
(v+\xi)\partial^\alpha(v- u_\epsi)\Big]\,\dx.
\end{aligned}
\end{equation*}

By H\"older's inequality and Corollary~\ref{aprisqrt}, we conclude that
\begin{equation}
\begin{aligned}\label{eq:lim1}
\lim_{\epsi\to0} c_\epsi = 0.
\end{aligned}
\end{equation}

On the other hand, by  
Lemma~\ref{WCWS1}, there exists $(m,u)\in L^1(\Omega) \times
W^{1,\gamma}(\Omega)$  satisfying  (D1) and such that $(m_\epsilon, u_\epsilon)$ converges to $(m,u)$
weakly in $L^1(\Omega)\times W^{1,\gamma}(\Omega)$ as $\epsilon \to 0$, extracting a subsequence if necessary.
Then,
using the definition of \(F[\eta,v]\)
(see \eqref{DRF2}), we get
\begin{equation}
\label{eq:lim2}
\begin{aligned}
\lim_{\epsi\to0}  \left<
  F
  \begin{bmatrix}
      \eta \\
      v
  \end{bmatrix},
  \begin{bmatrix}
      \eta  \\
      v 
  \end{bmatrix}
  -
  \begin{bmatrix}
      m_\epsilon \\
      u_\epsilon
  \end{bmatrix}
\right>  = \left<
  F
  \begin{bmatrix}
      \eta \\
      v
  \end{bmatrix},
  \begin{bmatrix}
      \eta  \\
      v 
  \end{bmatrix}
  -
  \begin{bmatrix}
      m \\
      u
  \end{bmatrix}
\right>.
\end{aligned}
\end{equation}

From \eqref{eq:pfe}, \eqref{eq:lim1}, and \eqref{eq:lim2}, we conclude that
\begin{equation*}
\begin{aligned}
 \left<
  F
  \begin{bmatrix}
      \eta \\
      v
  \end{bmatrix},
  \begin{bmatrix}
      \eta  \\
      v 
  \end{bmatrix}
  -
  \begin{bmatrix}
      m \\
      u
  \end{bmatrix}
\right>\geq 0;
\end{aligned}
\end{equation*}
that is, \((m,u)\) also satisfies (D2). Hence, \((m,u)\) is a weak solution of Problem~\ref{OP} in the sense of Definition~\ref{DOPWS1}.
\end{proof}

\bibliographystyle{plain}
\bibliography{mfg.bib}
%


%
%


\end{document}